\def\FF{{\mathbb F}}
\def\fn{{\mathbb F}_{2^n}}
\def\f2{{\mathbb F}_{2}}

\def\cA{\mathcal A}
\def\cB{\mathcal B}
\def\cC{\mathcal C}

\def\cK{\mathcal K}
\def\cL{\mathcal L}

\def\l{\ell}


\newcommand{\g}{\gamma}
\newcommand{\gd}{\delta}

\newcommand{\gl}{\lambda}

\newcommand{\gr}{\rho}
\newcommand{\gs}{\sigma}

\newcommand{\gt}{\tau}

\newcommand{\gG}{\Gamma}

\def\Im{\mathrm{Im}}
\newcommand{\Sym}{{\rm{Sym}}}
\newcommand{\GL}{{\rm{GL}}}

\documentclass{ws-jaa}
\newtheorem{theore}{Theorem}[section]
\newtheorem{lemm}[theore]{Lemma}
\newtheorem{remar}[theore]{Remark}
\newtheorem{propositio}[theore]{Proposition}
\newtheorem{definitio}[theore]{Definition}
\begin{document}

\markboth{M. Calderini}
{Primitivity of the group of a cipher involving the action of the key-schedule}

%
\catchline{}{}{}{}{}
%

\title{Primitivity of the group of a cipher involving the action of the key-schedule
}

\author{ Marco Calderini
}

\address{Department of Informatics, University of Bergen\\ Bergen, NO\\
\email{marco.calderini@uib.no}}

\maketitle

\begin{history}
\received{(Day Month Year)}
\revised{(Day Month Year)}
\accepted{(Day Month Year)}
\comby{(xxxxxxxxx)}
\end{history}

\begin{abstract}
The algebraic structure of the group generated by the encryption functions of a block cipher depends on the key schedule algorithm used for generating the round keys. For such a reason, in general, 
studying this group does not appear to be an easy task.
Previous works, focusing on the algebraic properties of groups associated to a cipher, have studied the group generated by the round functions of the cipher considering independent round keys.

In this paper, we want to study the more realistic group generated by the encryption functions, where the key schedule satisfies certain requirements. 
In this contest, we are able to identify sufficient conditions that permit to guarantee the primitivity of this group and the security of the cipher with respect to the partition-based trapdoor. This type of trapdoor has been recently introduced by Bannier et al. (2016) and it is a generalization of that introduced by Paterson in 1999.
\end{abstract}

\keywords{Block cipher; primitive group; trapdoor; group generated by encryption functions.}

\ccode{2000 Mathematics Subject Classification: 20B15, 20B35, 94A60}

\section{Introduction}

Since the work \cite{coppersmith1975generators}, where Coppersmith and Grossman considered a set of functions, to define a block cipher and studied the permutation group
 generated by these, much attention has been devoted to the group generated by the round functions of a block cipher (see for instance \cite{calderinisalerno,CGC-cry-art-carantisalaImp,even,Wernsdorf2}). Investigating the group theoretic structure of a block cipher is important for identifying and excluding undesirable properties. In this context, Paterson \cite{CGC-cry-art-paterson1} showed that the imprimitivity of the group generated by the round functions of a block cipher can represent a weakness exploitable for constructing a trapdoor. By a trapdoor it is meant an algebraic structure, hidden in the cipher design, that would allow an attacker to break the cipher easily. 
 
Most modern block ciphers belong to two families of symmetric cryptosystems, that is, Substitution-Permutation Networks (SPN) and Feistel Networks (FN). In this paper, we focus on iterated block ciphers of type SPN using the XOR operation for key addition. In \cite{CGC-cry-art-carantisalaImp}, these types of ciphers have been also called {\em translation-based} (tb) ciphers. For a tb cipher $\cC$, in \cite{calderinisalerno,aragona2014,CGC-cry-art-carantisalaImp}, the authors study the group generated by the round functions of $\cC$, $\gG_\infty(\cC)$, providing cryptographic conditions on the S-boxes and the mixing layer which guarantee the primitivity of the group.
In a recent work \cite{bannier2016partition} (see also \cite{bannier2017partition}), the authors introduce and study the partition-based trapdoor. This algebraic trapdoor generalizes that introduced by Paterson.

In \cite{calderini2018} the author shows that studying the group generated by the round functions could be not enough to guarantee the security of the cipher with respect to the imprimitivity and/or partition based trapdoor (see \cite[Section 4]{calderini2018}). For such a reason, in \cite{calderini2018} it has been studied a subgroup of the group generated by the round functions, namely the group generated by the encryption functions with independent round keys $\gG_{ind}(\cC)$. In \cite{calderini2018} cryptographic properties on the S-boxes of the block cipher, sufficient to guarantee the security of such a group to these trapdoors, are given. 

From the results of these previous works, we have which properties of the components of the round functions of the cipher can guarantee that the
trapdoors cannot work for all encryption functions, when we have independent round-keys uniformly distributed.

In this work, we focus on the group $\gG(\cC)$ generated by the encryption functions of a cipher $\cC$. Note that such a group depends on the key-schedule algorithm, and it is contained in the group generated by the encryption functions with independent round-keys. 
We are interested in investigating such a group since as for the case of the group generated by the round functions, it could be that also the security of the group studied in \cite{calderini2018} cannot guarantee the security of the cipher if the key-schedule does not generate independent round-keys.

Since $\Gamma(\cC)$ is the smallest group containing the encryption functions of $\cC$, it is important to understand its properties. However, studying such a group it is not easy, in general, since it strictly depends on the {round-keys} generated by the key-schedule algorithm.
In this work, we want to investigate some properties of the key-schedule and of the components of the round functions of the cipher that permit to determine the security of the cipher $\cC$ with respect to the algebraic attack of the partition-based trapdoor and, thus, the primitivity of the group $\Gamma(\cC)$.

The paper is organized as follows. In Section \ref{sec:pre}, we recall some definitions and give some results presented in \cite{bannier2016partition,calderini2018}. In Section \ref{sec:gamma}, we report our study on the group $\gG(\cC)$. In particular, in the first part of Section \ref{sec:gamma}, we give a first property for the key-schedule, based on having independent round keys on three rounds. This permits to use the cryptographic properties given in \cite{calderini2018} also for avoiding the partition trapdoor and obtain the primitivity of $\gG(\cC)$.
In the second part, we consider a subgroup $T$ of order $2^{n-1}$ of the translation group and analyze the possible partitions which can be mapped one into the other by $T$. From such analysis we are able to provide a weaker property for the key-schedule algorithm, for which is possible to identify cryptographic properties for obtaining the primitivity for the group $\gG(\cC)$.

\section{Preliminaries and notation}\label{sec:pre}

Let $\cC$ be a block cipher acting on a message space $V = (\FF_2)^n$, for some $n\ge 1$. Let $\cK$ be the  keys space. Then our cipher $\cC$ is given by a family of key-dependent permutations $\gt_k$, called encryption functions, 
$$\cC=\{\gt_k\mid k \in\cK\}.$$

We are interested in studying the algebraic properties of the group
$\Gamma(\cC)=\langle \gt_k\mid k \in\cK\rangle$. The study of $\Gamma(\cC)$ is a difficult task, in general. 

For the well-known cipher DES, there are some works analyzing the group $\gG(DES)$ (see for instance \cite{camp,cop,Kaliski}). In these works, using some weak-keys generated by the key-schedule of DES, it is provided a lower bound on the order of this group. These lower bounds permit to prove also that 3-DES is different from DES. 
Moreover, in \cite{Kaliski}, the authors provide some evidences suggesting that the group generated by DES is as large as the alternating group, showing also that if the permutation group generated by the encryption functions of a cipher is too small, then the cipher is vulnerable to birthday-paradox attacks. Note that the fact that the group $\Gamma(\cC)$ is large cannot be a security measure standing on its own \cite{MKPW}.

To the best of our knowledge, these works,  focusing on the cipher DES, are the only ones studying the properties of $\gG(\cC)$ for a given block cipher $\cC$.

\subsection{Translation based ciphers}

Most modern block ciphers are obtained by a composition of several key-dependent permutations, called round functions. That is, any encryption function $\gt_k$, of a cipher $\cC$, is a composition
of some permutations  $\gt_{k,1}, ... , \gt_{k,\ell}$ of $V$. Here we consider translation-based cipher, introduced in \cite{CGC-cry-art-carantisalaImp}. 

Let $m, b > 1$ and
$$
V=V_1\oplus\dots\oplus V_b,
$$
where the spaces $V_i$'s are isomorphic to $(\FF_2)^m$. 

We will denote the group of all permutations on $V$ by $\Sym(V)$. Given $v \in V$, we denote by $\gs_v\in \Sym(V )$ the translation of $V$ mapping $x$ to $x + v$. The group of all translations of $V$ will be denoted by $T(V)$. In the following, the action of $g \in \Sym(V)$ on an element $v\in V$ will be written as $vg$.

For any element $v\in V$, we can write $v=v_1\oplus\dots \oplus v_b$, for some $v_i\in V_i$. The map $\pi_i:V\to V_i$ is the projection mapping $v\mapsto v_i$.

A permutation $\gamma\in \Sym(V)$ acting as $v \gamma=v_1\gamma_1\oplus\dots\oplus v_b\gamma_b$, for all $v\in V$ and for some fixed $\gamma_i\in \Sym(V_i)$, is called a {\em parallel map} (or ``parallel S-box'') and any $\gamma_i$ is called an {\em S-box}. In the cryptographic context, a linear map $\gl:V\to V$ is traditionally called a ``Mixing Layer'' when used in composition with parallel maps, in the round functions of a cipher. 

For any $I\subset \{1,...,b\}$, with $I\ne \emptyset$ and $I\ne \{1,...,b\}$, we define $\bigoplus_{i\in I} V_i$ a {\em wall}. 
\begin{definitio}
A linear map $\gl\in \GL(V)$ is called a {\em proper mixing layer} if no wall is invariant under $\gl$. Moreover, we say that $\gl$ is {\em strongly-proper} if no wall is mapped in another wall.
\end{definitio}

We can characterize translation-based block ciphers by the following:
\begin{definitio}[\cite{CGC-cry-art-carantisalaImp}]\label{def:tb}
 A block cipher  $\mathcal{C} = \{ \gt_k \mid k  \in \mathcal{K} \}$ over
  $V=({\FF_2})^{n}$ is called {\em translation-based (tb)} if:
       \begin{itemize}
    \item[1)] it is the composition of a finite number of rounds, $\ell$, such that any round $\gt_{k,h}$ is given by the composition of three maps $\gamma_h,\gl_h$ and $\gs_{ k_h}$, where
    \begin{itemize}
    \item[-] $\gamma_h$ is a parallel S-box (depending on the round but not on $k$) and $0\gamma_h=0$\footnote{ The assumption $0\gamma_h = 0$ is not restrictive. Indeed, we can always include $0\gamma_h$ in the round key addition of the previous round (see \cite[Remark 3.3]{CGC-cry-art-carantisalaImp}). 
},
     \item[-] $\lambda_h$ is a bijective linear map (depending on the round but not on $k$),
    \item[-] $\gs_{k_h}$ is the translation by the round key (it depends on both $k$ and the round),
     \end{itemize}
     
      \item[2)] for at least one round, we have (at the same time) that $\gl_h$ is proper and that the map $\Phi_h:\mathcal{K} \to V$ given by $k \mapsto k_h$ is
      surjective.
    \end{itemize}
\end{definitio}

The round-keys are determined by a {\em key-schedule} algorithm, which is a map $\Phi:\mathcal{K}\to V^\ell$, $k\mapsto (k_1,...,k_\ell)$. The map $\Phi_h$ in Definition~\ref{def:tb} is given by $\pi_h\circ \Phi$.

The tb characterization allowed the authors of \cite{CGC-cry-art-carantisalaImp} to investigate the permutation group generated by the round functions of $\cC$. In particular, for any round $h$, let 
$$\Gamma_h(\cC)=\langle \gt_{k,h}\mid k \in\cK \rangle,$$
therefore, we can define the group containing $\Gamma(\cC)$ generated by the round functions
$$\Gamma_\infty(\cC)=\langle \Gamma_h(\cC)\mid h=1,...,\ell \rangle.$$
It is easy to show, see for instance \cite[Lemma 2.2]{calderinisalerno}, that the functions involved in the round functions of a tb cipher defined over a binary vector space are even, i.e. $\Gamma_\infty(\cC)$ is contained in the alternating group  $\mathrm{Alt}(V)$. Exploiting the O'Nan-Scott theory of primitive groups, it is possible to prove that several relevant ciphers generate a large group $\Gamma_\infty(\cC)$ (see for instance \cite{calderinisalerno,aragona2014,caranti}).

As shown in \cite{calderini2018}, studying the group generated by the round functions could be not enough to guarantee the security of the cipher or of the group $\Gamma(\cC)$. For this reason the author studied the group

$$\gG_{ind}(\cC)=\langle \gt_K \mid K\in V^\ell \rangle,$$
where, letting $K = (k_1 , . . . , k_\ell )$, $\gt_K$ is the encryption function obtained using $k_h\in V$
as round-key at round $h$ for any $1\le h\le \ell$. Clearly, the group $\gG_{ind}(\cC)$ is such that $$\gG(\cC)\subseteq \gG_{ind}(\cC)\subseteq\Gamma_\infty(\cC).$$

In the following we will consider as tb ciphers all ciphers satisfying just the first condition in Definition~\ref{def:tb}.

\subsection{Boolean functions}

Let $m\ge 1$, and let $f:(\FF_2)^m\rightarrow(\FF_2)^m$ be a vectorial Boolean function. The derivative of $f$ in the direction $u\in(\FF_2)^m$ is given by $\hat{f}_u(x)=f(x+u)+f(x)$. 
\begin{definitio}
Let $f$ be a {vectorial Boolean function}, define for any $a,b\in{(\FF_2)^m}$
$$
\gd_f(a,b)=|\{x\in{(\FF_2)^m} \mid\, \hat{f}_a(x)=b\}|.
$$
Then, $f$ is said to be {\em differentially $\gd$-uniform} if 
$$
{\gd}=\max_{a,b \in{(\FF_2)^m},\atop
a\neq 0}\gd_f(a,b)\,.
$$
\end{definitio}

Vectorial Boolean functions, having a low differential uniformity, may prevent differential cryptanalysis (introduced by Biham and Shamir \cite{CGC2-cry-art-biham1991differential}) when used as S-boxes in block ciphers. From this point of view, functions having differential $2$-uniformity (the smallest possible in even characteristic) are optimal. Functions differentially $2$-uniform are called {\em almost perfect nonlinear} (APN).

 By \cite[Fact 3]{CGC-cry-art-carantisalaImp}, a vectorial Boolean function differentially $\gd$-uniform satisfies
$$|{\rm Im}(\hat{f}_u)|\geq\frac{2^{m}}{\delta}.$$
Moreover, in a similar way, we can see that for any subset $S$ of $(\FF_2)^m$, $$|\hat{f}_u(S)|\geq\frac{|S|}{\delta}.$$

The following notion, introduced in \cite{CGC-cry-art-carantisalaImp}, focus on the image of vector spaces.
\begin{definitio}
Let $1\leq r<m$ and $f(0)=0$, we say that $f$ is {\it strongly $r$-anti-invariant} if, for any two subspaces $U$ and $W$ of $(\mathbb F_2)^m$ such that $f(U)=W$, then either $\dim(U)=\dim(W)<m-r$ or $U =W=(\mathbb F_2)^m$.
\end{definitio}

\subsection{Partition-based trapdoors}

Let $G\subseteq \Sym(V)$ be a transitive permutation subgroup. $G$ is called {\em primitive} if it has no nontrivial $G$-invariant partition of $V$.
That is, there does not exist a nontrivial partition $\cA\ne\{\{v\}\mid v \in V\}, \{V\}$ of $V$, such that $Ag\in \cA$ for all $A\in\cA$ and $g\in G$.
 On the other hand, if a nontrivial $G$-invariant partition exists, the group is called {\em imprimitive}.

As mentioned above, imprimitivity is an undesirable property of the group $\Gamma_{\infty}(\cC)$. Indeed, Paterson in \cite{CGC-cry-art-paterson1} constructs a DES-like cipher whose group generated by round functions is imprimitive. Using this property he is able to mount an attack on the cipher.

From the idea of the imprimitive action trapdoor, in a recent work \cite{bannier2016partition}, Bannier et al. introduce partition-based trapdoors. In this work, the authors show how to construct a tb cipher such that there is a (non-trivial) partition of the message space $V$ which is mapped onto another partition by the encryption functions.

In the following, we report some definitions and results from \cite{bannier2016partition}.

\begin{definitio}
Let $\gr$ be a permutation of $V$ and $\cA, \cB$ be two partitions of $V$. Let $\cA \gr=\{A\gr \mid A \in \cA\}$. We say that {\em $\gr$ maps $\cA$ onto $\cB$} if and only if $\cA\gr = \cB$. 

Moreover, we say that the permutation group {\em $G$ maps $\cA$ onto $\cB$} if, for all $\gr\in G$ not the identity, $\gr$ maps $\cA$ onto $\cB$. In particular, a permutation group is imprimitive if there exists a non-trivial partition which is mapped onto itself by $G$.
\end{definitio}

\begin{definitio}
 A partition $\cA$ of $V$ is said {\em linear} if there exists a subspace $U$  of $V$ such that $$\cA=\{ U+v\mid v\in V\}.$$ We denote with $\cL(U)$ such a partition.
\end{definitio}

For the translation group, we have the following characterization of the possible partitions $\cA$ and $\cB$ such that $T(V)$ maps $\cA$ onto $\cB$.

\begin{propositio}[\cite{harpes1997partitioning}]\label{prop:partTV}
Let $\cA$ and $\cB$ be two partitions of $V=(\FF_2)^n$. Then the permutation group $T(V)$ maps $\cA$ onto $\cB$ if and only if $\cA=\cB$ and $\cA$ is a linear partition.
\end{propositio}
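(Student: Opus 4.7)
\medskip
\noindent\textbf{Proof sketch.} The plan is to dispatch the ``if'' direction by direct computation, and then prove the converse in three steps: promote the hypothesis to full translation invariance of $\cA$, derive linearity from invariance, and finally conclude $\cB=\cA$.

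If $\cA=\cB=\cL(U)$ for a subspace $U\le V$, then every translation $\gs_v$ sends the coset $U+w$ to the coset $U+w+v$, so $\cL(U)\gs_v=\cL(U)=\cB$ for all $v\in V$, and in particular for every non-zero $v$. This settles the easy direction.

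For the converse, suppose $\cA\gs_v=\cB$ for every $v\in V\setminus\{0\}$. Step (i): for any two non-zero $v,w\in V$, $\cA\gs_v=\cB=\cA\gs_w$ forces $\cA\gs_{v+w}=\cA$ (using that $\gs_v^{-1}=\gs_v$ in characteristic two, and $\gs_w\gs_v=\gs_{v+w}$). Given any $u\in V$ with $u\ne 0$, provided $n\ge 2$ I can pick $v\in V\setminus\{0,u\}$ and set $w=v+u$, producing two distinct non-zero vectors with $v+w=u$; thus $\cA\gs_u=\cA$, i.e.\ $\cA$ is $T(V)$-invariant. (The case $n=1$ is immediate: the only partitions of $\FF_2$ are $\{V\}$ and $\{\{0\},\{1\}\}$, both linear and both $\gs_1$-invariant.) Step (ii): let $A\in\cA$ be the block containing $0$; for any $a\in A$, the translate $A\gs_a$ is a block of $\cA$ and contains $a$, and since $A$ also contains $a$, disjointness of partition blocks forces $A\gs_a=A$. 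Hence $A+a=A$ for every $a\in A$, so $A$ is closed under addition and is a subspace $U$ of $V$. Any other block $B\in\cA$ contains some $b$, and then $A\gs_b$ is a block of $\cA$ containing $b$, so $B=A\gs_b=U+b$; thus $\cA=\cL(U)$. Step (iii) is immediate: $\cB=\cA\gs_v=\cA$ for any non-zero $v$ by invariance.

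The only delicate point is step (i): one must realise every $u\in V$ as a sum of two distinct non-zero vectors, which is possible exactly when $n\ge 2$ and explains why the $n=1$ case must be treated as an aside. Everything else reduces to the standard observation that a $T(V)$-invariant partition is necessarily the coset partition of a subspace.
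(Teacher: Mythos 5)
Your proof is correct and follows the standard argument: the paper itself imports this proposition from Harpes--Massey without proof, and your block-containing-zero analysis is exactly the technique the paper uses for its own analogous result (Proposition~\ref{prop:blocchi2} on subgroups of order $2^{n-1}$). Your care with the non-identity quantifier in step (i) and the $n=1$ aside is a nice touch, since the paper's definition of ``$G$ maps $\cA$ onto $\cB$'' indeed excludes the identity.
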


Focusing on the mixing-layer we have.

\begin{propositio}[\cite{bannier2016partition}]\label{prop:ml}
Let $\gl$ be a linear permutation of $V$ and let $U$ be a subspace of $V$. Then $\cL(U)\gl=\cL(U\gl)$.
\end{propositio}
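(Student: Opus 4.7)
The plan is to unwind both sides of the claimed identity directly from the definitions and reduce it to the two properties of $\gl$ that are available: linearity and bijectivity. By the definition of the action of a permutation on a partition, $\cL(U)\gl = \{(U+v)\gl \mid v\in V\}$, and by the definition of a linear partition, $\cL(U\gl) = \{U\gl + w \mid w\in V\}$. Hence the task is to show that these two families of subsets of $V$ coincide as sets of blocks.

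First I would use the linearity of $\gl$ to compute, for each $v\in V$,
\[
(U+v)\gl = U\gl + v\gl,
\]
which already gives the inclusion $\cL(U)\gl \subseteq \cL(U\gl)$, since the right-hand side is a coset of the subspace $U\gl$ (note that $U\gl$ is indeed a subspace because $\gl$ is linear). For the reverse inclusion I would use surjectivity of $\gl$: as $v$ runs through $V$, the element $w := v\gl$ also runs through all of $V$, so every coset $U\gl + w$ arises as $(U+v)\gl$ for some $v$.

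There is essentially no obstacle here; the statement is a direct consequence of the elementary fact that a linear bijection sends the coset partition of a subspace to the coset partition of its image. The only point that deserves a sentence is to verify that passing from $v$ to $v\gl$ does not collapse distinct blocks, which is immediate from the injectivity of $\gl$: if $U\gl + v_1\gl = U\gl + v_2\gl$, then $(v_1-v_2)\gl \in U\gl$, hence $v_1 - v_2 \in U$, so the original cosets $U+v_1$ and $U+v_2$ already coincided. Thus the map $U+v \mapsto (U+v)\gl$ is a bijection between $\cL(U)$ and $\cL(U\gl)$, proving the proposition.
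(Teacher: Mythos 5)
Your proof is correct: the paper imports this proposition from Bannier et al.\ without reproving it, and your argument is exactly the standard one — linearity gives $(U+v)\gl = U\gl + v\gl$, and bijectivity of $\gl$ ensures that $v\gl$ sweeps out all of $V$, so the two coset families coincide. The final paragraph on injectivity of the block map is not needed to establish equality of the partitions (a permutation automatically sends distinct blocks to distinct images), but it is harmless and correct.
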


For tb ciphers with independent round keys we have the following.

\begin{theore}[\cite{bannier2016partition,calderini2018}]\label{th:francesi}
Let $\cC$ be a translation based cipher on $V$. Suppose that there exist $\cA$ and $\cB$ non-trivial partitions such that for all $\ell$-tuples of round-keys $k=(k_1,...,k_\ell)$ the encryption function $\gt_k$ maps $\cA$ onto $\cB$. Define $\cA_1=\cA$ and for $1\le i\le \ell$, $\cA_{i+1}=\cA_{i}\gt_{i}$, where $\gt_i=\gamma_i\gl_i$ is the $i$-th round function without the round key translation, and suppose also that $\gt_1$ is the identity map. Then
\begin{itemize}
\item $\cA_{\ell+1}=\cB$
\item for any $1\le i\le \ell+1$, $\cA_{i}$ is a linear partition.
\end{itemize}
\end{theore}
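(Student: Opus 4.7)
The plan is to bootstrap from Proposition~\ref{prop:partTV} by exploiting the fact that $\cA \gt_k = \cB$ holds \emph{uniformly} in the round keys. Writing out the encryption we have $\gt_k = \gt_1 \gs_{k_1} \gt_2 \gs_{k_2} \cdots \gt_\ell \gs_{k_\ell}$, which under the hypothesis $\gt_1 = \mathrm{id}$ becomes $\gt_k = \gs_{k_1} \gt_2 \gs_{k_2} \cdots \gt_\ell \gs_{k_\ell}$. The central idea is that if the image of $\cA$ under $\gt_k$ is the fixed partition $\cB$ for every choice of keys while the tail of the encryption is a fixed bijection (once the later keys are frozen), then the intermediate partition just after $\gs_{k_i}$ cannot actually depend on $k_i$; Proposition~\ref{prop:partTV} then forces the corresponding partition to be linear.

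Concretely, I would first establish the base step. Fix $k_2, \ldots, k_\ell$ to any value and set $f = \gt_2 \gs_{k_2} \cdots \gt_\ell \gs_{k_\ell}$, so that $(\cA \gs_{k_1}) f = \cB$ for every $k_1$. Since $f$ is a fixed bijection and the right-hand side is fixed, $\cA \gs_{k_1}$ is the same for all $k_1$, hence equal to $\cA \gs_0 = \cA$. Thus $T(V)$ stabilises $\cA$, and Proposition~\ref{prop:partTV} yields that $\cA_1 = \cA$ is linear; since $\gt_1 = \mathrm{id}$, $\cA_2 = \cA_1 \gt_1 = \cA$ is linear as well.

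The inductive step proceeds identically: assume $\cA_1, \ldots, \cA_i$ are all linear, with $i \ge 2$. Because linear partitions are invariant under every translation, every key addition $\gs_{k_j}$ with $j < i$ fixes the running partition, so applying the first $i-1$ rounds in sequence produces, independently of $k_1, \ldots, k_{i-1}$, the partition $\cA_i$ just before $\gt_i$ and $\cA_i \gt_i = \cA_{i+1}$ just after it. Consequently the hypothesis $\cA \gt_k = \cB$ reduces to $\cA_{i+1} \gs_{k_i} \gt_{i+1} \gs_{k_{i+1}} \cdots \gt_\ell \gs_{k_\ell} = \cB$. Freezing $k_{i+1}, \ldots, k_\ell$ and varying $k_i$, the same argument as before shows $\cA_{i+1} \gs_{k_i} = \cA_{i+1}$ for all $k_i$, so $\cA_{i+1}$ is linear by Proposition~\ref{prop:partTV}, closing the induction.

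It remains to identify $\cA_{\ell+1}$ with $\cB$. Since $\cA_{\ell+1}$ is linear it is fixed by every translation, hence $\cA_{\ell+1} \gs_{k_\ell} = \cA_{\ell+1}$; but the preceding analysis shows this equals $\cB$, giving $\cA_{\ell+1} = \cB$. I do not foresee a substantial obstacle: the whole argument is a disciplined unrolling of the encryption that converts the uniformity of the hypothesis over the key space into linearity of each intermediate partition. The only delicate point is keeping the indices straight so that the assumption $\gt_1 = \mathrm{id}$ is used precisely once, to guarantee that the first key addition precedes any nonlinear step.
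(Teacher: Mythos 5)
Your proof is correct and follows essentially the same route the paper indicates in the remark after Theorem~\ref{th:francesi} (the full proof is delegated to the cited references): freeze all but one round key, observe that the fixed tail bijection forces every translation $\gs_{k_i}$ to map the intermediate partition onto one and the same partition, and invoke Proposition~\ref{prop:partTV} to conclude linearity round by round. Your careful handling of the indices and of the identification $\cA_{\ell+1}=\cB$ is a faithful elaboration of that sketch.
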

\begin{remar}
The proof of Theorem~\ref{th:francesi} is based on the observation that since we have all possible round keys for all rounds, fixed any round $i$, all the translations $\gs_{k_i}$ (the XOR with the $i$th round key) map $\cA_i$ onto the same partition. So, from Proposition \ref{prop:partTV} we have that $\cA_i$ is linear.
\end{remar}
\begin{remar}
When $\gt_1$ is the identity, the first round is represented by a whitening (i.e. adding only the round key) and the only partitions which can be mapped by $T(V)$ into another partition are the linear partitions (Proposition \ref{prop:partTV}). However, if $\gt_1$ is not the identity map, then we might have that $\cA_1$ is non-linear, indeed it may happen that the first parallel S-box maps a non-linear partition $\cA_1$ into the partition $\cA'=\cA_1\gamma_1$. But then, $\cA_2=\cA'\gl_1$ has to be linear (and so $\cA'$) since for the first round key we can have all possible translations.

Moreover, from the proof of Theorem~\ref{th:francesi} (see  \cite{bannier2016partition}) we can note that the maps $\gt_i$'s can be any permutation in $\Sym(V)$ and not only compositions of a parallel S-boxes and a linear permutations.
\end{remar}

From \cite{calderini2018}, we have the following sufficient conditions to avoid the partition trapdoor in $\gG_{ind}(\cC)$.
%
%
%

%

\begin{theore}\label{th:main}
Let $\cC$ be a tb cipher with $\ell$ rounds. Let $h<\ell$ be such that the mixing layer $\gl_h$ is strongly proper
and the parallel S-boxes of round $h$ and round $h+1$, $\gamma_h$ and $\gamma_{h+1}$, are composed by S-boxes which are
\begin{enumerate}
\item  differentially $2^r$-uniform, with $r<m$, 
\item strongly $(r-1)$-anti-invariant.
\end{enumerate}
 Then, there do not exist $\cA$ and $\cB$ non-trivial partitions such that for all $\ell$-tuple of round keys the encryption functions map $\cA$ onto $\cB$. In particular, $\gG_{ind}(\cC)$ is primitive.
\end{theore}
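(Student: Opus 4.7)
The plan is to argue by contradiction. Suppose non-trivial partitions $\cA, \cB$ of $V$ exist such that every encryption function of $\cC$ with independent round-keys maps $\cA$ onto $\cB$. My strategy is to exploit the hypotheses on rounds $h$ and $h+1$ to derive a contradiction with strong properness of $\gl_h$; primitivity of $\gG_{ind}(\cC)$ will then follow as the particular case $\cA = \cB$ (together with transitivity, which is provided by the translations $\gs_v$ already lying in the group).

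First, I apply Theorem~\ref{th:francesi} to the sequence of intermediate partitions $\cA_i$ to obtain that they are linear, so $\cA_h = \cL(U_h)$ and $\cA_{h+1} = \cL(U_{h+1})$ for non-zero proper subspaces $U_h, U_{h+1}$ of $V$. (If $\gt_1 \neq \mathrm{id}$ then $\cA_1$ may fail to be linear, but this edge case is handled by rephrasing from round $2$ onwards, using that $\cA_i$ is linear for $i \geq 2$.) Since $\gl_h$ is a linear bijection, Proposition~\ref{prop:ml} factors round $h$: setting $W := U_{h+1}\gl_h^{-1}$, the parallel map $\gamma_h$ sends $\cL(U_h)$ onto $\cL(W)$, and then $\gl_h$ sends $\cL(W)$ onto $\cL(U_{h+1})$. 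Because $0\gamma_h = 0$, the block through the origin is preserved, so $\gamma_h(U_h) = W$ as sets; projecting onto the $i$-th component yields $\gamma_{h,i}(U_{h,i}) = W_i$, where $U_{h,i} = \pi_i(U_h)$ and $W_i = \pi_i(W)$ are subspaces of $V_i$.

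The core of the argument, and its main technical obstacle, is to upgrade these componentwise equalities to the conclusion that $U_h$ is a wall. I would combine three ingredients. Strong $(r-1)$-anti-invariance of $\gamma_{h,i}$, applied to $\gamma_{h,i}(U_{h,i}) = W_i$, forces each pair $(U_{h,i}, W_i)$ to be either $(V_i, V_i)$ or a pair of subspaces of common dimension at most $m-r$. Differential $2^r$-uniformity, applied to the derivative condition $\mathrm{Im}(\hat{\gamma}_{h,u}) \subseteq W$ (valid for every $u \in U_h$ and obtained because $\gamma_h$ sends cosets of $U_h$ into cosets of $W$), forces $\dim W_i \geq m-r$ whenever some $u \in U_h$ has $u_i \neq 0$. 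The remaining borderline case $\dim U_{h,i} = m-r$ is then eliminated by a dimension-counting argument exploiting the product structure of parallel maps and the bijection $\gamma_h|_{U_h}: U_h \to W$, which in addition forces $U_h$ to equal the wall $\bigoplus_{i: U_{h,i} = V_i} V_i$. Running the same analysis at round $h+1$ yields that $U_{h+1}$ is a wall as well.

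Finally, since a parallel map fixes each wall setwise, $W = \gamma_h(U_h) = U_h$; hence $U_{h+1} = U_h\gl_h$ exhibits $\gl_h$ as a linear permutation mapping a wall to a wall, contradicting its strong properness. This contradiction rules out the existence of $\cA, \cB$ and, in particular, the existence of a non-trivial $\gG_{ind}(\cC)$-invariant partition, so $\gG_{ind}(\cC)$ is primitive.
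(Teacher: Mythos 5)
Your overall architecture is exactly the one the paper uses (explicitly for the analogous Theorem~\ref{th:gam}): invoke Theorem~\ref{th:francesi} to make the intermediate partitions $\cA_h,\cA_{h+1},\cA_{h+2}$ linear, peel off the mixing layer with Proposition~\ref{prop:ml} so that $\gamma_h$ maps $\cL(U_h)$ onto $\cL(U_{h+1}\gl_h^{-1})$, show the relevant subspaces are walls, and contradict strong properness of $\gl_h$. The paper itself delegates the middle step to Proposition~\ref{prop:blocchi} (imported from the 2018 paper without proof), whereas you attempt to re-derive it; that is where your argument has a genuine gap.

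The problem is that you run the wall argument on the \emph{projections} $U_{h,i}=\pi_i(U_h)$. Even if you succeed in showing $\pi_i(U_h)=V_i$ for every $i$ with $\pi_i(U_h)\neq\{0\}$, this does not imply that $U_h$ is a wall: a ``diagonal'' subspace such as $\{(x,\phi(x)) : x\in V_1\}\subseteq V_1\oplus V_2$, $\phi$ a linear bijection, has full projections onto both components without being a wall. The step you defer to ``a dimension-counting argument exploiting the product structure'' is precisely the content of Proposition~\ref{prop:blocchi} and is not routine from the data you have collected. The paper's route (visible in the proofs of Lemmas~\ref{lm:1}--\ref{lm:3}) works instead with the \emph{intersections} $U_h\cap V_i$: for an index $i$ with $\pi_i(U_h)\neq\{0\}$ one first produces a \emph{nonzero} vector of $U_h\cap V_i$ (take $u\in U_h$ with $\pi_i(u)\neq 0$, restrict $\hat\gamma_u$ to $V_i$, and use that $\hat\gamma_u(V_i)+u\gamma\subseteq W\cap V_i$ together with $|\hat\gamma_u(V_i)|\geq 2$); then for $0\neq u\in U_h\cap V_i$ one gets $\Im(\hat\gamma_{i,u})\subseteq W\cap V_i$ with $|\Im(\hat\gamma_{i,u})|\geq 2^{m-r}$ \emph{and} $0\notin\Im(\hat\gamma_{i,u})$ (since $\gamma_i$ is a permutation), forcing $\dim(W\cap V_i)\geq m-r+1$, which clashes directly with strong $(r-1)$-anti-invariance applied to $\gamma_i(U_h\cap V_i)=W\cap V_i$; hence $V_i\subseteq U_h$ for every such $i$, and $U_h$ is a wall. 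Note also that your ``borderline case $\dim U_{h,i}=m-r$'' disappears once you record that $0$ is excluded from the image of the derivative, so the $+1$ in the cardinality bound does the work you assign to dimension counting. Everything downstream of the wall claim (a parallel map fixes walls, so $W=U_h$ and $\gl_h$ would send the wall $U_h$ to the wall $U_{h+1}$; transitivity of $\gG_{ind}(\cC)$ via the translations obtained as $\gt_K^{-1}\gt_{K'}$) is fine and agrees with the paper.
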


\section{On algebraic properties of $\Gamma(\cC)$}\label{sec:gamma}

In this section, we focus on the group $\Gamma(\cC)$. Studying the group $\Gamma(\cC)$ is not an easy task, because such a group depends strongly on the key-schedule algorithm. 
For this reason, most authors have preferred to study the group $\Gamma_\infty(\cC)$, and, more recently, the group $\Gamma_{ind}(\cC)$.

We will give two properties for the algorithm of a key-schedule, which permit to provide sufficient conditions for avoiding partition trapdoors.
The first requirement for the key-schedule permits to avoid the trapdoor using the same conditions as in Theorem~\ref{th:main}.
In the following we will denote by $\Phi$ the key-schedule algorithm, which is the map from $\cK$ to $V^\ell$, associating a master key $k$ to a round keys $(k_1,...,k_\l)$.

\begin{definitio}\label{def:key}
Let $\cC$ be a tb cipher over $V$, with key space $\cK$. Let $\ell$ be the number of the rounds and $\Phi:\cK\to V^\ell$ be the key-schedule of $\cC$. Suppose that there exists $2\le i\le \ell-1 $ such that for any $(k_{i-1},k_i,k_{i+1})\in V^3$ the element 
$(\bar{k}_1,...,\bar{k}_{i-2},k_{i-1},k_i,k_{i+1},\bar{k}_{i+2},...,\bar{k}_\ell)$ is in $\Im(\Phi)$,  where the values $\bar{k}_j$'s are fixed. Then we say that the key-schedule $\Phi$ is {\em 3-rounds independent at round $i$}.
\end{definitio}

We need the following result from \cite{calderini2018}.

\begin{propositio}\label{prop:blocchi}
Let $\gamma$ be a parallel map, i.e. $\gamma=(\gamma_1,...,\gamma_b)$ with $\gamma_i\in\Sym(V_i)$ for all $i$. Suppose that for all $i$ the map $\gamma_i$ is
\begin{enumerate}
\item  differentially $2^r$-uniform, with $r<m$, 
\item strongly $(r-1)$-anti-invariant. 
\end{enumerate}
Let $\cL(U)$ and $\cL(W)$ be non-trivial linear partitions of $V$, then $\gamma$ maps $\cL(U)$ onto $\cL(W)$ if and only if $U$ and $W$ are walls, in particular $U=W$.
\end{propositio}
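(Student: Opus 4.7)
The plan is to handle the ``if'' direction directly and then attack the ``only if'' direction by combining the derivative behaviour of a parallel map with the differential-uniformity and strong-anti-invariance hypotheses. For ``if'', if $U=W$ is a wall $V_I:=\bigoplus_{i\in I}V_i$, then since $\gamma$ acts blockwise and each $\gamma_i$ permutes $V_i$, for any $v\in V$ we have $\gamma(v+V_I)=\gamma(\sum_{j\notin I}v_j)+V_I$, a coset of $V_I$; hence $\cL(V_I)\gamma=\cL(V_I)$.

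For the converse, assume $\cL(U)\gamma=\cL(W)$ with $U$ non-trivial. Since each $\gamma_i(0)=0$ (required by strong anti-invariance), $U\gamma=W$. The key observation is that for every $u\in U$ and $a\in V$, $\gamma(a+u)$ and $\gamma(a)$ lie in a common coset of $W$, so $\hat\gamma_u(a)\in W$; since $\gamma$ is parallel, $\hat\gamma_u(a)=\sum_i(\hat\gamma_i)_{u_i}(a_i)$ and projecting to $V_i$ gives $\Im(\hat\gamma_i)_{u_i}\subseteq W_i:=\pi_i(W)$. Blockwise, $\gamma_i$ restricts to bijections $U_i:=\pi_i(U)\to W_i$ and $\bar U_i:=U\cap V_i\to\bar W_i:=W\cap V_i$, so strong $(r-1)$-anti-invariance forces $U_i=V_i$ or $\dim U_i\le m-r$, and analogously for $\bar U_i$.

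The crux is a ``single-block extension'' argument that I apply twice. The archetype: suppose $U^{\circ}\subsetneq V_i$ is a subspace of dimension $m-r$ satisfying $\gamma_i(x+U^{\circ})=\gamma_i(x)+W^{\circ}$ for all $x\in V_i$, where $W^{\circ}:=\gamma_i(U^{\circ})$. Picking $x_0\in V_i\setminus U^{\circ}$, bijectivity forces $\gamma_i(x_0)\notin W^{\circ}$, so $\gamma_i(\langle U^{\circ},x_0\rangle)=W^{\circ}\cup(\gamma_i(x_0)+W^{\circ})=\langle W^{\circ},\gamma_i(x_0)\rangle$, a subspace of dimension $m-r+1$ that violates strong $(r-1)$-anti-invariance since $m-r+1$ is neither $\le m-r$ nor equal to $m$ when $r\ge 2$. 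The first application rules out $0<\dim U_i\le m-r$: differential $2^r$-uniformity gives $|\Im(\hat\gamma_i)_{u_i}|\ge 2^{m-r}$ for $u_i\in U_i\setminus\{0\}$, and combined with $\dim W_i\le m-r$ (from anti-invariance) this forces $\Im(\hat\gamma_i)_{u_i}=W_i$ and the coset identity $\gamma_i(x+U_i)=\gamma_i(x)+W_i$, to which the archetype applies. Hence every $U_i\in\{\{0\},V_i\}$.

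Set $I=\{i:U_i=V_i\}$, so $U\subseteq V_I$; it remains to show $U=V_I$. If not, some $j\in I$ has $\bar U_j\subsetneq V_j$. For each $v_j\in V_j$ (using $\pi_j(U)=V_j$), pick $u=v_j+u'\in U$; the identity $\hat\gamma_u=(\hat\gamma_j)_{v_j}+(\hat\gamma_{I\setminus j})_{u'}$ together with $\gamma(u)=\gamma_j(v_j)+\gamma_{I\setminus j}(u')\in W$ yields $\Im(\hat\gamma_j)_{v_j}\subseteq\gamma_j(v_j)+\bar W_j$. Taking $v_j\ne 0$ and combining $|\Im|\ge 2^{m-r}$ with $\dim\bar W_j\le m-r$ (from anti-invariance) forces $\dim\bar W_j=m-r$ and $\Im(\hat\gamma_j)_{v_j}=\gamma_j(v_j)+\bar W_j$. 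Restricting to $v_j\in\bar U_j$ (where $\gamma_j(v_j)\in\bar W_j$) collapses this to the coset identity $\gamma_j(x+\bar U_j)=\gamma_j(x)+\bar W_j$ for all $x\in V_j$, and the archetype applies once more to produce a contradiction. Hence $\bar U_j=V_j$ for all $j\in I$, so $V_I\subseteq U$ and $U=V_I=W$ is a wall. The main obstacle is the second derivative computation: one must carefully track the ``correction'' $\gamma_j(v_j)$ that arises when $v_j\notin\bar U_j$ and realize that modulo $\bar W_j$ the map $\gamma_j$ still sends cosets of $\bar U_j$ to cosets of $\bar W_j$, which is exactly what enables reusing the archetype.
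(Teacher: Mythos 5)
Your proof is correct, and its skeleton (deduce $U\gamma=W$ from $0\gamma=0$; observe $\hat\gamma_u(V)\subseteq W$; split blockwise; lower-bound derivative images by $2^{m-r}$ via differential uniformity; upper-bound the ambient subspace via anti-invariance; run a two-stage analysis, first on $\pi_i(U)$ and then on $U\cap V_i$) is the same as the one the paper relies on — note that the paper itself does not reprove this proposition but imports it from [calderini2018]; the model is visible in the proofs of Lemmas~3.11--3.13. Where you genuinely diverge is the finishing move. The paper's mechanism is the observation that $0\notin\Im(\hat{\gamma}_{i,u})$ for a permutation $\gamma_i$ and $u\ne 0$: since the candidate subspace contains $0$ but the derivative image does not, one gets $|W\cap V_i|\ge 2^{m-r}+1$, hence $\dim(W\cap V_i)\ge m-r+1$, which contradicts strong $(r-1)$-anti-invariance in a single step. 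Your ``archetype'' instead forces the coset identity $\gamma_i(x+U^{\circ})=\gamma_i(x)+W^{\circ}$ and adjoins a coset representative $x_0$ to produce a subspace of dimension $m-r+1$ mapped onto a subspace. This is sound, but it is a detour: the intermediate conclusion you extract, $\Im(\hat{\gamma}_i)_{u_i}=W_i$ (and likewise $\Im(\hat{\gamma}_j)_{v_j}=\gamma_j(v_j)+\bar W_j$ once $v_j\in\bar U_j$), is already absurd because the right-hand side contains $0$ while the left-hand side cannot. Using that shortcut would shorten both applications of the archetype and, more substantively, would remove your dependence on $r\ge 2$: your extension argument needs $m-r+1\ne m$, so as written it says nothing in the APN case $r=1$ (where, admittedly, ``strongly $0$-anti-invariant'' falls outside the paper's definition, so this is a boundary case rather than an error). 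Your second-stage computation — tracking the correction term $\gamma_j(v_j)$ to land $\Im(\hat{\gamma}_j)_{v_j}$ inside the coset $\gamma_j(v_j)+\bar W_j$ — is exactly the right way to pass from $\pi_j(U)=V_j$ to $V_j\subseteq U$, and matches the paper's handling of the analogous step.
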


\begin{theore}\label{th:gam}
Let $\cC$ be a tb cipher with a key-schedule $\Phi$ 3-round independent at round $i$, for some $2\le i\le \ell-1 $. Then, if the parallel S-boxes of round $i$ and round $i+1$, $\gamma_i$ and $\gamma_{i+1}$, satisfy Condition 1) and 2) of Proposition \ref{prop:blocchi} and $\gl_i$ is strongly proper, there do not exist $\cA$ and $\cB$ non-trivial partitions such that for all $k\in\cK$ the map $\tau_k$ maps $\cA$ onto $\cB$. In particular, $\gG(\cC)$ is primitive and it is not weak with respect to the partition-based trapdoor.
\end{theore}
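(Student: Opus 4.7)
The plan is to argue by contradiction. Suppose there exist non-trivial partitions $\cA,\cB$ of $V$ such that $\gt_k$ maps $\cA$ onto $\cB$ for every $k\in\cK$. Using Definition \ref{def:key}, freeze the coordinates $\bar k_1,\dots,\bar k_{i-2},\bar k_{i+2},\dots,\bar k_\ell$ witnessing 3-round independence and let $(k_{i-1},k_i,k_{i+1})$ vary freely over $V^3$; all resulting master keys lie in $\cK$, so the corresponding encryption functions all still map $\cA$ onto $\cB$. Denote by $\ga$ the composition of the first $i-2$ rounds with the fixed keys, and by $\gb$ the composition of the last $\ell-i-1$ rounds with the fixed keys, so that each such $\gt_k$ factors as
\[
\gt_k \;=\; \ga\cdot\gamma_{i-1}\gl_{i-1}\gs_{k_{i-1}}\cdot\gamma_i\gl_i\gs_{k_i}\cdot\gamma_{i+1}\gl_{i+1}\gs_{k_{i+1}}\cdot\gb.
\]

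Next I would absorb the deterministic prefix into $\cA$ and the deterministic suffix into $\cB$, setting $\cA^{*}=\cA\,\ga\,\gamma_{i-1}\gl_{i-1}$ and $\cB^{*}=\cB\,\gb^{-1}$. Both are non-trivial partitions, and for every $(k_{i-1},k_i,k_{i+1})\in V^3$ the three-round map $\gs_{k_{i-1}}\gamma_i\gl_i\gs_{k_i}\gamma_{i+1}\gl_{i+1}\gs_{k_{i+1}}$ sends $\cA^{*}$ to $\cB^{*}$. This is exactly the hypothesis of Theorem \ref{th:francesi} applied to the three-round tb sub-cipher whose first round is the pure whitening $\gs_{k_{i-1}}$ (so its $\gt_1$ is the identity) and in which all three round keys range independently over $V$. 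The theorem then forces $\cA^{*}$, the intermediate partition $\cA^{*}\gamma_i\gl_i$, and $\cB^{*}=\cA^{*}\gamma_i\gl_i\gamma_{i+1}\gl_{i+1}$ to all be linear.

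Writing $\cA^{*}=\cL(U)$, Proposition \ref{prop:ml} applied to $\gl_i^{-1}$ shows $\cL(U)\gamma_i$ is linear, hence by Proposition \ref{prop:blocchi} the subspace $U$ must be a wall and $\cL(U)\gamma_i=\cL(U)$; Proposition \ref{prop:ml} then gives $\cA^{*}\gamma_i\gl_i=\cL(U\gl_i)$. Running the same step one round further, the linearity of $\cL(U\gl_i)\gamma_{i+1}\gl_{i+1}$ together with Propositions \ref{prop:ml} and \ref{prop:blocchi} forces $U\gl_i$ to be a wall as well. Since $\gl_i$ then maps the wall $U$ to the wall $U\gl_i$, this contradicts the strong properness of $\gl_i$, and no such $\cA,\cB$ can exist. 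Primitivity of $\gG(\cC)$ follows by specializing $\cA=\cB$: any non-trivial $\gG(\cC)$-invariant partition would furnish precisely such a pair.

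The main obstacle, as I see it, is framing the three-round reduction cleanly: one has to draw the boundary between the ``frozen'' and ``varying'' portions of the cipher at the right spot, pushing the tail of round $i-1$ into the partition on the left and pulling the head of round $i+2$ into the partition on the right, so that the sub-cipher isolated by $\cA^{*}$ and $\cB^{*}$ has the whitening-first shape demanded by Theorem \ref{th:francesi}. Once this bookkeeping is in place, the wall versus strong-properness collision is the same phenomenon that drives Theorem \ref{th:main}, only now localized to the single mixing layer $\gl_i$ in the middle of the cipher rather than spread across the whole encryption.
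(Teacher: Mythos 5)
Your proposal is correct and follows essentially the same route as the paper: freeze the key-schedule outputs outside rounds $i-1,i,i+1$, absorb the fixed prefix (through $\gamma_{i-1}\gl_{i-1}$) and suffix into the partitions, apply Theorem~\ref{th:francesi} to the resulting whitening-first three-round sub-cipher with independent keys to get linearity of the three intermediate partitions, and then use Propositions~\ref{prop:ml} and~\ref{prop:blocchi} to force two walls related by $\gl_i$, contradicting strong properness. The only cosmetic difference is that the paper names the intermediate partitions $\cA_1,\cA_2,\cA_3$ and cites Proposition~\ref{prop:blocchi} once for all three, whereas you unwind the two applications explicitly; the argument is the same.
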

\begin{proof}
Since $\Phi$ is 3-round independent at round $i$, there exist some fixed values $\bar{k}_j$, with $j\in \{1,...,\ell\}\setminus\{i-1,i,i+1\}$, such that 
$(\bar{k}_1,...,k_{i-1},k_i,k_{i+1},...,\bar{k}_\ell)$ is in $\Im(\Phi)$, for any $(k_{i-1},k_i,k_{i+1})\in V^3$.\\
Let us consider the encryption maps $$\tau_k=\tau_{1}\gs_{\bar{k}_1}\cdots\tau_{i-2}\gs_{\bar{k}_{i-2}}\tau_{i-1}\gs_{{k}_{i-1}}\tau_{i}\gs_{{k}_{i}}\tau_{i+1}\gs_{{k}_{i+1}}\tau_{i+2}\gs_{\bar{k}_{i+2}}\cdots\tau_{\ell}\gs_{\bar{k}_{\ell}},$$
where $\gt_j=\gamma_j\gl_j$ for all $j$.

Denoting by $\bar{\gt}_1=\tau_{1}\gs_{\bar{k}_1}\cdots\tau_{i-2}\gs_{\bar{k}_{i-2}}\tau_{i-1}$ and by $\bar{\gt}_2=\tau_{i+2}\gs_{\bar{k}_{i+2}}\cdots\tau_{\ell}\gs_{\bar{k}_{\ell}}$, we have that these maps are given by
\begin{equation}\label{eq:gtK}
\bar{\gt}_1 \gs_{k_{i-1}}\gamma_i\gl_i\gs_{k_i}\gamma_{i+1}\gl_{i+1}\gs_{k_{i+1}}\bar{\gt}_2,
\end{equation}
where $(k_{i-1},k_i,k_{i+1})\in V^3$.
Suppose that there exist $\cA$ and $\cB$ non-trivial partitions such that any encryption function $\tau_k$ (and in particular those as in \eqref{eq:gtK}) maps $\cA$ onto $\cB$.
We can consider the maps in \eqref{eq:gtK} as the encryption functions of a cipher having 3 rounds with independent round keys.

Denote by $\cA_1=\cA\bar\gt_1$, $\cA_2=\cA_1\gamma_i\gl_i$, $\cA_3=\cA_2\gamma_{i+1}\gl_{i+1}$. From Theorem~\ref{th:francesi} we have that $\cA_1$, $\cA_2$ and $\cA_3$ are linear. Thus, there exist three vector subspace $W_1,W_2$ and $W_3$ (non-trivial) such that $\cL(W_j)=\cA_j$ for $j=1,2,3$.\\
 From Proposition \ref{prop:blocchi} it follows that $W_1, W_2$ and $W_3$ are walls. But, this implies that $W_2=W_1\gl_i$ (see Proposition \ref{prop:ml}) which is not possible since $\gl_i$ is strongly-proper. So, there do not exist $\cA$ and $\cB$ non-trivial.
\end{proof}

In the following, we want to consider key-schedules for which the property in Definition~\ref{def:key} does not hold for all possible round keys in three consecutive rounds.
In particular, we will consider the case when for three (consecutive) rounds the keys are contained in some subgroup $T'\subseteq T((\FF_2)^n)$ (maybe not the same for all the three rounds) of order $2^{n-1}$ (half of the possible round keys).

In order to study this case, we need to characterize the pairs of partitions $\cA,\cB$ for which $T'$ maps $\cA$ onto $\cB$. Before giving such a characterization, we will define the following partition of $(\FF_2)^n$.

\begin{definitio}
Let $U$ be a vector subspace of $(\FF_2)^n$ of dimension $n-1$. Let $W_1$ and $W_2$ be two distinct subspaces of $U$. We define the partition of $(\FF_2)^n$
$$
\cL\cA_U(W_1|W_2)=\{W_1+v\,:\,v \in U\}\cup\{(W_2+\bar v)+v\,:\,v \in U\}
$$
for any arbitrary element $\bar v \in (\FF_2)^n\setminus U$. We will call $\cL\cA_U(W_1|W_2)$ a {\em linear-affine partition}.
\end{definitio}
Note that if $W_1=W_2$, then the partition is linear.
\begin{remar}
For every $\bar v \in (\FF_2)^n\setminus U$, the space  $(\FF_2)^n$ can be partitioned into $U$ and $U+\bar v$. Then a linear-affine partition $\cL\cA_U(W_1|W_2)$ is just a linear partition restricted to $U$, $\cL(W_1)$, plus another linear partition $\cL(W_2)$, always restricted to $U$, translated by the vector $\bar v$. That is, $\cL(W_1)$ is a partition for $U$ and $\cL(W_2)+\bar v$ is a partition for $U+\bar v$.
\end{remar}

\begin{propositio}\label{prop:blocchi2}
Let $V=(\FF_2)^n$ and $T'$ be a subgroup of $T(V)$ of order $2^{n-1}$. Let $\cA$ and $\cB$ be two partitions of $V$. Then, the permutation subgroup $T'$ maps $\cA$ onto $\cB$ if and only if 
\begin{itemize}
\item $\cA=\cB$ and 
\item $\cA$ is a linear partition, $\cL(W)$, or $\cA$ is linear-affine partition $\cL\cA_U(W_1|W_2)$, where $U=\{v\,:\,\gs_v\in T'\}$.
\end{itemize}
\end{propositio}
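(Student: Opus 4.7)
The plan is to prove the two directions of the equivalence separately. The reverse direction is routine: every $\gs_u$ with $u\in U$ stabilizes $\cL(W)$ (as $\cL(W)$ is fixed by all of $T(V)$), and for $\cL\cA_U(W_1|W_2)$ it preserves each of the two cosets of $U$ in $V$ while acting on them as a translation by an element of $U$, which permutes the blocks of the two restricted linear partitions $\cL(W_1)$ on $U$ and $\cL(W_2)+\bar v$ on $U+\bar v$.

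For the forward direction my first step will be to upgrade the hypothesis to the cleaner statement that $\cA=\cB$ and that every $\gs_u$, $u\in U$, stabilizes $\cA$. To that end I would take two distinct non-identity elements $\gs_{v_1},\gs_{v_2}\in T'$: both map $\cA$ onto $\cB$, so $\gs_{v_1+v_2}=\gs_{v_1}\gs_{v_2}^{-1}$ fixes $\cB$. As $v_1,v_2$ range over $U\setminus\{0\}$ with $v_1\ne v_2$, the sum $v_1+v_2$ covers all of $U\setminus\{0\}$ (provided $|U|\ge 4$), so every $\gs_u$, $u\in U$, stabilizes $\cB$; the same argument applied to $\cA$ gives the analogous stability, and then any non-identity $\gs_v\in T'$ yields $\cB=\gs_v(\cA)=\cA$.

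The second and main step is to analyze the $U$-action on the blocks of $\cA$. Fix $A\in\cA$ and let $W_A=\{u\in U:A+u=A\}\le U$; then $A$ is a union of cosets of $W_A$ and the $U$-orbit of $A$ consists of $|U|/|W_A|$ pairwise disjoint blocks of $\cA$ whose union is $A+U$. Because $U$ has index $2$ in $V$, the set $A+U$ is one of $U$, $U+\bar v$, or $V$, forcing $|A|=|W_A|$ in the first two cases and $|A|=2|W_A|$ in the third. In the first two cases $A$ is a single coset of $W_A$ contained in $U$ or in $U+\bar v$; the blocks of $\cA$ lying inside $U$ must then form $\{W_1+u:u\in U\}$ for some $W_1\le U$, those inside $U+\bar v$ form $\{W_2+\bar v+u:u\in U\}$ for some $W_2\le U$, and both families must be present because $\cA$ covers $V$, giving $\cA=\cL\cA_U(W_1|W_2)$ (which collapses to $\cL(W_1)$ when $W_1=W_2$). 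In the third case, $A=(W_A+a_1)\cup(W_A+a_2)$ with $a_1\in U$ and $a_2\in U+\bar v$; setting $c=a_1+a_2\notin U$, a closure check in characteristic $2$ shows that $A+a_1=W_A\oplus\langle c\rangle=:W$ is a subspace of $V$ with $W\not\le U$, so $A=W+a_1$, and the equality $|V|/|W|=|U|/|W_A|$ forces the $U$-orbit of $A$ to exhaust every $V$-translate of $W$, giving $\cA=\cL(W)$. Note that a block whose orbit falls into this third case has its orbit cover all of $V$, so then all blocks of $\cA$ are of the third type and no mixing with the first two cases can occur.

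The main obstacle will be this third case, where one has to simultaneously verify that the two-coset block $A$ is a coset of a genuine subspace of $V$ (not just some arbitrary union of two cosets of $W_A$) and that its $U$-orbit coincides with the full $V$-translate family of that subspace; the remaining bookkeeping — dividing on the index-$2$ location of $A+U$ and assembling the outputs in the two subcases into $\cL\cA_U(W_1|W_2)$ or $\cL(W)$ — is routine once that closure-plus-counting step is in place.
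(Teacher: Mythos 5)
Your proof is correct, but it follows a genuinely different route from the paper's. The paper never isolates the invariance statement: it works with the pair $(\cA,\cB)$ throughout, anchors on the distinguished blocks $A_0\ni 0$ and $A_{\bar v}\ni\bar v$, and shows by direct element-chasing (using $A_0\gs_v=B_0$ whenever $v\in A_0\cap U$, and a small contradiction argument in the straddling case) that these blocks are subspaces, respectively unions $W\cup(W+\bar v)$, from which the partition structure is read off. You instead first reduce to the cleaner statement that $\cA=\cB$ and every $\gs_u$, $u\in U$, stabilizes $\cA$ --- via the observation that $\gs_{v_1}\gs_{v_2}=\gs_{v_1+v_2}$ must fix the partition --- and then run an orbit--stabilizer count on an \emph{arbitrary} block $A$, splitting on whether $A+U$ equals $U$, $U+\bar v$, or $V$. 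Both proofs share the same underlying case division (block contained in one coset of $U$ versus block straddling both), but your mechanism is different: the counting identity $|A|\cdot|U|/|W_A|=|A+U|$ replaces the paper's hands-on verification that the zero block is a subspace, and in the straddling case your identity $|V|/|W|=|U|/|W_A|$ replaces the paper's ``suppose $W\cup(W+\bar v)\subsetneq A_0$'' contradiction. What your route buys is uniformity (all blocks are treated at once, not just those through $0$ and $\bar v$) and a cleaner separation of the ``two partitions'' issue from the structural analysis; what it costs is the mild hypothesis $|U|\ge 4$ needed for every $u\in U\setminus\{0\}$ to be written as $v_1+v_2$ with $v_1,v_2$ distinct and nonzero. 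That restriction is harmless here, since in the paper's setting $n=mb$ with $m,b>1$ (and indeed for $n\le 2$ the statement itself degenerates), but you should state it explicitly.
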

\begin{proof}
It is easy to check that if $\cA$ is linear or linear-affine, then it is mapped by $T'$ into itself.

Viceversa, let us consider two partitions $\cA$ and $\cB$ and suppose that $T'$ maps $\cA$ onto $\cB$. Let $U=\{v\,:\,\gs_v\in T'\}$. Since $T'$ is a subgroup of order $2^{n-1}$, $U$ has dimension $n-1$. \\ Denote by $A_0\in\cA$ and $B_0\in\cB$ the blocks of the partitions which contain $0$.
We can have two cases 
\begin{enumerate}
\item $A_0\subseteq U$
\item $\exists \bar v\in A_0$ such that $\bar v\notin U$
\end{enumerate}

Consider the first case. The block $A_0$ is a vector subspace. Indeed, for any $v\in A_0$ we have $0\in A_0\gs_v$ and thus $ A_0\gs_v=B_0$. Moreover, we have also $v \in B_0$. This implies $A_0=B_0$ and $ A_0\gs_v=A_0$ for all $v$ in $A_0$. So, $A_0$ is a vector subspace.\\
 Then, for all $v$ in $U$ the set $A_0+v$ is a block of $\cB$. These blocks form a partition for $U$ and, obviously, $A_0+v$ is also a block of $\cA$ since $\gs_v^{-1}=\gs_v$.
 
Now, in a similar way, we can fix $\bar v\notin U$ and consider the blocks $A_{\bar{v}}\in\cA$ and $B_{\bar{v}}\in\cB$ containing the vector $\bar v$. Since $\{A_0+v\}_{v\in U}$ is a partition for $U$ we have that  $A_{\bar{v}}\subseteq U+\bar v$ and also $B_{\bar{v}}\subseteq U+\bar v$.\\
Let $A'=A_{\bar{v}}+\bar v$ and $B'=B_{\bar{v}}+\bar v$. The sets $A',B'\subseteq U$ are vector subspaces and $A'=B'$. Indeed, for any $v \in A'$ we have $A_{\bar{v}}\gs_v= B_{\bar{v}}$, since $\bar v\in A_{\bar{v}}\gs_v$. Moreover,  $v+\bar v \in B_{\bar{v}}$ and, since $B_{\bar{v}}\gs_v= A_{\bar{v}}$, it follows that $v+\bar v \in A_{\bar{v}}$. Thus, $A_{\bar{v}}=B_{\bar{v}}$ and it is an affine subspace, implying that $A'=B'$ and $A'$ is a vector subspace.

As above, for all $v$ in $U$ the set $A_{\bar{v}}+v$ is a block of $\cA$ and $\cB$. So, $\{A_{\bar{v}}+v\}_{v\in U}$ forms a partition of $U+\bar v$ and $\cA=\cB$. Now, if $A'=A_0$ then $\cA$ is linear, otherwise $\cA=\cL\cA_U(A_0|A')$.

For the second case, we have that there exists $\bar v\in A_0$ such that $\bar v\notin U$. Let $W=A_0\cap U$. 
For any $v\in W$, since $0\in A_0\gs_v$ we have that $A_0\gs_v=B_0$ and also that $v\in B_0$. Moreover, we have that $B_0\gs_v=A_0$ for all $v$ in $W$. Which implies that for any $v_1,v_2 \in W$, $v_1+v_2\in A_0$. Since $v_1+v_2$ is also in $U$, $v_1+v_2\in W$ and thus $W$ is a vector subspace. \\
In a similar way, we have also that $W+\bar v\subseteq A_0$ and $W+\bar v\subseteq B_0$. 
Now, suppose that $W \cup (W+\bar v)\subsetneq A_0$. Then, there exists $w\in A_0\cap (U+\bar v)$ with $w\notin W+\bar v$. Thus, $w=\bar v+u$ for some $u$ in $U\setminus W$. But then, $A_0\gs_u=B_0$ since $\bar v\in B_0$. Moreover, $B_0\gs_u=A_0$ implies that $u\in A_0$ which is not possible. Thus, $A_0=W \cup (W+\bar v)= B_0$. So, $A_0$ is a vector subspace and it is easy to check that $\{A_0+v\,:\,v \in U\}$ is a partition of $(\FF_2)^n$ and $\cA=\cB=\cL(A_0)$.
\end{proof}

For a given permutation $\g$ which maps a linear (linear-affine) partition into another one we have the following properties.

\begin{lemm}\label{lemmaeasy}
Let $\gamma\in \Sym(V)$ with $0\gamma=0$. Then:
\begin{itemize}
\item[(i)] If $\gamma$ maps $\cL(W)$ onto $\cL(W')$, then for any $u\in W$ we have $\hat{\gamma}_u(V)\subseteq W'$.
\item[(ii)] If $\gamma$ maps $\cL(W)$ onto $\cL\cA_{U}(W_1|W_2)$, then
\begin{itemize}
\item for any $u\in W$, we have $\hat{\gamma}_u(S)\subseteq W_1$, where $S=U\gamma^{-1}$;
\item for any $u\in W$, we have $\hat{\gamma}_u(S')\subseteq W_2$, where $S'=(U+\bar v)\gamma^{-1}$ and $\bar v$ is an arbitrary element of $V\setminus U$.
\end{itemize}
\item[(iii)] If $\gamma$ maps $\cL\cA_{U}(W_1|W_2)$ onto $\cL(W)$, then 
\begin{itemize}
\item for any $u\in W_1$, we have $\hat{\gamma}_u(U)\subseteq W$; 
\item for any $u\in W_2$, we have $\hat{\gamma}_u(U+\bar v)\subseteq W$, where $\bar v$ is an arbitrary element of $V\setminus U$.
\end{itemize}
\item[(iv)] If $\gamma$ maps $\cL\cA_U(W_1|W_2)$ onto $\cL\cA_{U'}(W'_1|W'_2)$, then
\begin{itemize}
\item for any $u\in W_1$, we have $\hat{\gamma}_u(S)\subseteq W'_1$, where $S=U\cap U'\gamma^{-1}$; 
\item for any $u\in W_2$, we have $\hat{\gamma}_u(S')\subseteq W'_1$, where $S'=(U+\bar v)\cap U'\gamma^{-1}$ and $\bar v$ is an arbitrary element of $V\setminus U$.
\end{itemize}
On the other hand,
\begin{itemize}
\item  for any $u\in W_1$, we have $\hat{\gamma}_u(S)\subseteq W'_2$, where $S=U\cap (U'+\bar v ')\gamma^{-1}$, and $\bar v'$ is an arbitrary element of $V\setminus U'$;
\item  for any $u\in W_2$, we have $\hat{\gamma}_u(S')\subseteq W'_2$, where $S'=(U+\bar v)\cap (U'+\bar v ')\gamma^{-1}$ and  $\bar v$, $\bar v'$ are arbitrary elements of $V\setminus U$ and $V\setminus U'$, respectively.
\end{itemize}
\end{itemize}
\end{lemm}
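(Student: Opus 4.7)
The plan is to exploit the single unifying observation behind all four items: if a permutation $\gamma$ maps a partition $\cA$ onto a partition $\cB$, then two elements lying in a common block of $\cA$ have images lying in a common block of $\cB$. Consequently, if $x$ and $x+u$ lie in the same block of $\cA$, then $x\gamma + (x+u)\gamma = \hat{\gamma}_u(x)$ equals the difference of two elements of a single block of $\cB$, and so lies in the underlying subspace of that block (each block of a linear or linear-affine partition is an affine subspace whose direction is a specified vector subspace). All four items reduce to applying this remark after correctly identifying which block of $\cB$ each image falls into.

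For (i) I would start by fixing $u\in W$ and an arbitrary $x\in V$. Since $u\in W$, the vectors $x$ and $x+u$ lie in the same coset $W+x$, which is a block of $\cL(W)$; hence $x\gamma$ and $(x+u)\gamma$ lie in a common coset of $W'$, so $\hat{\gamma}_u(x)\in W'$. For (ii) I would fix $u\in W$ and $x\in S=U\gamma^{-1}$, so that $x\gamma\in U$. The key point is that in $\cL\cA_U(W_1|W_2)$ the blocks contained in $U$ are precisely the cosets of $W_1$ (so $x\gamma$ lies in a $W_1$-block), while the blocks contained in $U+\bar v$ are cosets of $W_2$ translated by $\bar v$. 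Since $x$ and $x+u$ share a $W$-block, their images share the same block of $\cL\cA_U(W_1|W_2)$, which must be the $W_1$-block containing $x\gamma$; the difference therefore lies in $W_1$. The case $x\in S'$ is handled identically with $U+\bar v$ in place of $U$ and $W_2$ in place of $W_1$.

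For (iii) I would fix $u\in W_1\subseteq U$ and $x\in U$: then $x+u\in U$ and $x$, $x+u$ lie in the same coset of $W_1$, hence in the same $W_1$-block of $\cL\cA_U(W_1|W_2)$. Their images then lie in the same block of $\cL(W)$, so $\hat{\gamma}_u(x)\in W$. For $u\in W_2$ I would work inside $U+\bar v$: the blocks there are $W_2$-cosets translated by $\bar v$, so $x$ and $x+u$ share such a block for $x\in U+\bar v$, and the same conclusion follows. Item (iv) is the same argument carried out twice: taking $u\in W_1$ and $x\in S=U\cap U'\gamma^{-1}$ guarantees that $x,x+u$ sit in a common $W_1$-block of the source partition (since $u\in W_1$ and $x\in U$) while simultaneously $x\gamma\in U'$, so the common target block is a $W_1'$-block; the other three sub-cases correspond to the four choices of source block type ($W_1$ or $W_2$) and target block type ($W_1'$ or $W_2'$), and each is obtained by intersecting the appropriate source region ($U$ or $U+\bar v$) with the preimage of the appropriate target region ($U'$ or $U'+\bar v'$).

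The argument is essentially combinatorial bookkeeping and there is no real obstacle; the only point requiring care is the explicit description of the blocks of a linear-affine partition, namely that the blocks sitting in $U$ are cosets of $W_1$ and the blocks sitting in $U+\bar v$ are of the form $W_2+\bar v+v$ with $v\in U$, so that ``which subspace governs the coset'' depends on whether the block lies in $U$ or in its complementary coset. Once this is kept straight, each of the eight statements in (i)--(iv) is a one-line consequence of the common-block principle described above.
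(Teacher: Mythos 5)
Your proposal is correct and follows essentially the same route as the paper: the paper also argues block-by-block, computing $(W+s)\gamma = W_1+s\gamma$ (and its analogues) to identify the common image block and concluding that the derivative lies in the direction subspace of that block. Your ``common-block principle'' is just a uniform phrasing of the case-by-case computation the paper carries out, including the key bookkeeping that blocks of $\cL\cA_U(W_1|W_2)$ inside $U$ are $W_1$-cosets while those inside $U+\bar v$ are translated $W_2$-cosets.
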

\begin{proof}~\\
\noindent{\bf (i)}
Since $\gamma$ maps $\cL(W)$ onto $\cL(W')$ and $0\gamma=0$ we have $W\gamma=W'$. Moreover, for any $v \in V$ we deduce that $(W+v)\gamma=W'+v\gamma$. Then, let $u\in W$, for any $v\in V$ $(v+u)\gamma\in W'+v\gamma$ and $(v+u)\gamma+v\gamma=\hat\gamma_u(v)\in W'$.\\

\noindent{\bf (ii)}
Let $S=U\gamma^{-1}$. For any $s \in S$, the element $s\gamma$ is contained in  $W_1+s\gamma\subseteq U$. Then, for any $s \in S$ we have $(W+s)\gamma =W_1+s\gamma$ and, thus, $(u+s)\gamma+s\gamma=\hat\gamma_u(s)\in W_1$. On the other hand, consider $S'=(U+\bar v)\gamma^{-1}$. For any $s \in S'$ it holds $s\gamma=w+\bar v$ for some $w\in U$ and $(W+s)\gamma=W_2+s\gamma=(W_2+\bar v)+w$. Thus, let $u\in W$, for  any $s \in S'$ we obtain $\hat\gamma_u(s)\in W_2$. \\

\noindent{\bf (iii)}
Now we have $W_1\gamma=W$ and $(W_2+\bar v)\gamma =W+\bar v$. Let $u\in W_1$, since for any $v\in U$ it holds $(W_1+v)\gamma=W+v\gamma$, we have $\hat\gamma_u(v)\in W$. On the other hand, if we consider $v\in U+\bar v$, we have $v=w+\bar v$ for some $w\in U$, and $((W_2+\bar v)+w)\gamma=W+v\gamma$. This implies that, fixed $u\in W_2$, for any $v\in U+\bar v$ we have $\hat\gamma_u(v)\in W$.\\

\noindent{\bf (iv)}
Let $S=U\cap U'\gamma^{-1}$. For any $s\in S$ we have $s\in W_1+s\subseteq U$ and $(W_1+s)\gamma=W'_1+s\gamma\subseteq U'$. As before, for any fixed $u\in W_1$, we obtain $\hat\gamma_u(s)\in W'_1$ for any $s\in S$.
Similarly, let $S'=(U+\bar v)\cap U'\gamma^{-1}$. For any $s\in S'$ we have $s=(w+\bar v)$, with $w\in U$, and  $s\in (W_2+\bar v)+w$. So, $((W_2+\bar v)+w)\gamma=W'_1+s\gamma$. Let $u\in W_2$ be fixed, we obtain $\hat\gamma_u(s)\in W'_1$ for any $s\in S'$.

On the other hand, consider $S=U\cap (U'+\bar v ')\gamma^{-1}$. Now, for any $s\in S$ we have $s\in W_1+s$ and $s\gamma=w'+\bar v'$ for some $w'\in U'$. Then, $(W_1+s)\gamma=(W'_2+\bar v')+w'=W'_2+s\gamma$. Fixed $u\in W_1$, we obtain $\hat\gamma_u(s)\in W'_2$ for all $s\in S$.\\ 
In a similar way, we have that, letting $S'=(U+\bar v)\cap (U'+\bar v ')\gamma^{-1}$ and $u\in W_2$, $\hat{\gamma}_u(s)\subseteq W'_2$ for any element $s$ in $S'$.
\end{proof}

Let $V=V_1\oplus ...\oplus V_b$, $\dim(V)=n$ and $V_i=(\FF_2)^m$. For any subspace $U$ of dimension $n-1$ we can define the set $$J_U=\{j\,:\, V_j\cap U\subsetneq V_j\}.$$ Note that, since $\dim(U)=n-1$ for any $j\in J_U$ we have  $\dim(U\cap V_j)=m-1$.\\

In the following, we will characterize the possibles linear and linear-affine partitions which can be mapped one into another by a parallel map, satisfying some cryptographic conditions. In particular, given a parallel map $\gamma$, we need to consider the cases:
\begin{enumerate}
\item  $\gamma$ maps a linear partition onto a linear-affine partition.
\item $\gamma$ maps a linear-affine partition onto a linear partition.
\item $\gamma$ maps a linear-affine partition onto a linear-affine partition.
\end{enumerate}
Note that the case when $\gamma$  maps a linear partition onto a linear partition has been dealt with in Proposition \ref{prop:blocchi}.

\begin{lemm}\label{lm:1}
Let $\gamma=(\gamma_1,...,\gamma_b)$ be a parallel map over $V=V_1\oplus ...\oplus V_b$, with $V_i=(\FF_2)^m$, $0\gamma=0$. Suppose that for all $i$ the map $\gamma_i$ is
\begin{enumerate}
\item  differentially $2^r$-uniform, with $r<m-1$, 
\item strongly $r$-anti-invariant. 
\end{enumerate}
If $\gamma$ maps $\cL(W)$ onto $\cL\cA_{U}(W_1|W_2)$ (non-trivial), then $W$, $W_1$ and $W_2$ are walls and $W=W_1=W_2$. In particular, $\cL\cA_{U}(W_1|W_2)$ is linear.
\end{lemm}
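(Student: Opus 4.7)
The strategy is to reduce the problem to the linear case handled by Proposition~\ref{prop:blocchi}. Specifically, I aim to show that $W_1 = W_2$, so that $\cL\cA_U(W_1|W_2)$ coincides with the linear partition $\cL(W_1)$, and then invoke Proposition~\ref{prop:blocchi} to conclude that $W = W_1$ is a wall.

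First I set up the dimensions. Since $0\gamma = 0$ and $0 \in W$, the block of $\cL(W)$ through $0$ is $W$, which must map to the block of $\cL\cA_U(W_1|W_2)$ through $0$, namely $W_1$. Hence $W\gamma = W_1$ and $|W|=|W_1|$; counting blocks of the two partitions then yields $|W_1|=|W_2|$, so $\dim W = \dim W_1 = \dim W_2 =: d$. Next, Lemma~\ref{lemmaeasy}(ii) gives, for every $u \in W$,
$$\hat{\gamma}_u(S) \subseteq W_1, \qquad \hat{\gamma}_u(S') \subseteq W_2,$$
with $S = U\gamma^{-1}$ and $S' = (U + \bar v)\gamma^{-1}$; since $S \cup S' = V$ this forces $\hat{\gamma}_u(V) \subseteq W_1 \cup W_2 \subseteq U$. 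Because $\gamma$ is parallel so is $\hat{\gamma}_u$, and its image equals the product set $\hat{(\gamma_1)}_{u_1}(V_1) \oplus \cdots \oplus \hat{(\gamma_b)}_{u_b}(V_b)$; differential $2^r$-uniformity of each $\gamma_i$ guarantees that every factor with $u_i \neq 0$ has size at least $2^{m-r}$.

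The crux is to force $W_1 = W_2$. Assume, for contradiction, that $W_1 \neq W_2$; since both have dimension $d$, the set $W_1 \cup W_2$ is not a subspace and is not contained in any subspace of dimension $d$. Choosing $u \in W$ of minimal support $J = \{i : u_i \neq 0\}$, one can localise the product set $\hat{\gamma}_u(V)$ to a single coordinate $j \in J$, so that $\hat{(\gamma_j)}_{u_j}(V_j)$ must lie inside a union of at most two subspaces of $V_j$ (coming from the intersections of $W_1$ and $W_2$ with the $j$-th coordinate), each of dimension strictly less than $m$. Applying strong $r$-anti-invariance of $\gamma_j$ to the subspaces arising as inverse images of these intersections, combined with the numerical bound $2^{m-r} \geq 4$ (which uses $r < m-1$), yields a dimension contradiction: strong $r$-anti-invariance forbids the intermediate range of dimensions, while the differential lower bound forbids the small range. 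It follows that $W_1 \cap V_j$ must equal the corresponding intersection for $W_2$ in each coordinate, whence $W_1 = W_2$.

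With $W_1 = W_2 =: W'$, a direct check shows $\cL\cA_U(W'|W') = \cL(W')$, so $\gamma$ maps $\cL(W)$ onto the linear partition $\cL(W')$ and Proposition~\ref{prop:blocchi} yields $W = W'$, a wall, completing the proof. The main technical obstacle is the step forcing $W_1 = W_2$: because $W_1 \cup W_2$ is not a subspace, strong anti-invariance cannot be invoked on it directly, and one must descend coordinate-by-coordinate using the parallel structure of $\hat{\gamma}_u$. The strengthenings in the hypotheses relative to Proposition~\ref{prop:blocchi} (strong $r$-anti-invariance instead of $(r-1)$-anti-invariance, and $r < m-1$ instead of $r < m$) provide precisely the extra slack needed to exclude the genuinely linear-affine case.
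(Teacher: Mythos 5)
Your toolkit is the right one (Lemma~\ref{lemmaeasy}(ii), the bound $|\hat{f}_u(S)|\ge |S|/2^r$, strong $r$-anti-invariance), and the reduction you aim for is legitimate in principle: the hypotheses here do imply those of Proposition~\ref{prop:blocchi}, so once $W_1=W_2$ is known the rest follows. But the pivotal step --- proving $W_1=W_2$ \emph{before} establishing any wall structure --- is exactly where the difficulty lives, and your sketch for it does not close. First, ``choosing $u\in W$ of minimal support'' does not localise $\hat{\gamma}_u(V)$ to one coordinate: if the support $J$ has size at least $2$, the containment $\hat{\gamma}_u(V)\subseteq W_1\cup W_2$ only constrains the \emph{projections} $\pi_j(W_1)\cup\pi_j(W_2)$, not the intersections $W_1\cap V_j$ and $W_2\cap V_j$, and a projection can be all of $V_j$ even when the intersection is $\{0\}$. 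To get a nonzero element of $W$ supported on a single block one needs a separate argument (the paper's: for $u\in W$ with $\pi_i(u)\ne 0$ and $S'=(U\cap V_i)\gamma^{-1}\subseteq V_i$, the set $\hat{\gamma}_u(S')+u\gamma$ lies in $W_1\cap V_i$ and has at least $2^{m-1-r}\ge 2$ elements, so $W_1\cap V_i\ne\{0\}$ and hence, by parallelism, $W\cap V_i\ne\{0\}$). Second, your concluding inference ``$W_1\cap V_j$ equals the corresponding intersection for $W_2$ in each coordinate, whence $W_1=W_2$'' is a non sequitur: a subspace of $V_1\oplus\cdots\oplus V_b$ is not determined by its intersections with the $V_j$ unless it is the direct sum of them, i.e.\ a wall --- which is precisely the property you have deferred to Proposition~\ref{prop:blocchi}. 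The plan is therefore circular. A further, smaller issue: you invoke strong $r$-anti-invariance on ``inverse images of these intersections,'' but that notion applies to pairs of subspaces $A,B$ with $A\gamma_j=B$; the correct pair is $(W\cap V_i,\,W_1\cap V_i)$, available only after one has a nonzero $u\in W\cap V_i$.

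The paper's proof runs in the opposite order, and that order appears to be forced. It first shows $W$ is a wall: assuming $\{0\}\ne W\cap V_i\subsetneq V_i$ for some $i$, it produces $0\ne u\in W\cap V_i$ as above, then gets $\hat{\gamma}_{i,u}(S')\subseteq W_1\cap V_i$ with $|W_1\cap V_i|\ge 2^{m-r-1}+1$, so $\dim(W_1\cap V_i)\ge m-r$ while $(W\cap V_i)\gamma_i=W_1\cap V_i$ with $W\cap V_i\subsetneq V_i$, contradicting strong $r$-anti-invariance. Only then, with $W=W_1$ a wall contained in $U$, does it pick $i\in J_U$, $\bar v\in V_i\setminus U$, and compare $(W+\bar v\gamma^{-1})\gamma=W_2+\bar v$ with $W_1+\bar v$ to conclude $W_2=W_1$; this last comparison uses $W\cap V_i=\{0\}$, i.e.\ the wall property. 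You should restructure your argument along these lines rather than trying to extract $W_1=W_2$ first.
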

\begin{proof}
Let $I=\{i\,:\, \pi_i(W)\ne \{0\}\}$. Clearly $I\ne\emptyset$ since $W$ is not trivial. Suppose that there exists $i\in I$ such that $V_i\cap W\subsetneq V_i$, i.e. $W$ is not a wall.

First of all, there exists $u\in V_i\cap W$ nonzero. Indeed, consider any $u\in W$ with $\pi_i(u)\ne 0$ (such an element exists since $i\in I$). 
Let $S=U\gamma^{-1}$, since $\gamma$ is a parallel map and $0\gamma=0$ we have that $S'=S\cap V_i=(U\cap V_i)\gamma^{-1}$. Moreover, $|S'|\ge 2^{m-1}$ ($|U\cap V_i|=2^{m-1}$ if $i \in J_U$). From Lemma~\ref{lemmaeasy} it follows that $\hat \gamma_u(S')\subseteq W_1$ and $|\hat \gamma_u(S')|\ge |S'|/2^r\ge 2$. 

Recalling that $0\gamma=0$, we have $W\gamma=W_1$ and  $u\gamma\in W_1$. Thus, since $W_1$ is a vector subspace it follows that $\hat \gamma_u(S')+u\gamma\subseteq W_1$. Actually, $\hat \gamma_u(S')+u\gamma\subseteq W_1\cap V_i$. So, there exists $w\ne 0$ in $W_1\cap V_i=(W\cap V_i)\gamma$, implying that there exists $u$ nonzero in $W\cap V_i$. 

Now, let $0\ne u\in W\cap V_i$. We obtain that $\hat\gamma_u(S')=\hat\gamma_{i,u}(S')\subseteq W_1\cap V_i$, which implies $|W_1\cap V_i| \ge 2^{m-r-1}+1$ (note that $|\hat\gamma_{i,u}(S')|\ge 2^{m-r-1}$ and $0\notin\hat\gamma_{i,u}(S')$ since $\gamma_i$ is a permutation). Thus, $\dim(W_1\cap V_i)\ge m-r$ which is in contradiction with the fact that $\gamma_i$ is strongly $r$-anti-invariant.

So, $W$ is a wall and since $\gamma$ is a parallel map it holds $W=W_1$. Moreover, $W\subseteq U$ implies that for all $i\in J_U$ we have that $V_i\nsubseteq W$.\\
 Fix any $i\in J_U$, we can choose $\bar v\in V_i$ and $\bar v \notin U$. Denoting by $v=\bar v \gamma^{-1}$, we have $(W+v)\gamma=W_2+\bar v$. Since $W$ is a wall, $\gamma$ is a parallel map and $v,\bar v \in V_i$ it is easy to check that also $W_2$ is a wall and $W_2=W$. Since $W_1=W_2$ the partition $\cL\cA_{U}(W_1|W_2)$ is linear.
\end{proof}

If we know a priori the $n-1$ dimensional vector space $U$, linked to the linear-affine partition, then we can obtain the following corollary.
\begin{corollary}
Let $\gamma=(\gamma_1,...,\gamma_b)$ be a parallel map over $V=V_1\oplus ...\oplus V_b$, with $V_i=(\FF_2)^m$, $0\gamma=0$. Let $U$ be a vector space of dimension $n-1$ and suppose that for all $i\notin J_U$ the map $\gamma_i$ is
\begin{enumerate}
\item  differentially $2^r$-uniform, with $r<m$, 
\item strongly $r-1$-anti-invariant. 
\end{enumerate}
otherwise for $j \in J_U$ $\gamma_j$  is
\begin{enumerate}
\item[(1')]  differentially $2^r$-uniform, with $r<m-1$, 
\item[(2')] strongly $r$-anti-invariant. 
\end{enumerate}
If $\gamma$ maps $\cL(W)$ onto $\cL\cA_{U}(W_1|W_2)$ (non-trivial), then $W$, $W_1$ and $W_2$ are walls and $W=W_1=W_2$. In particular, $\cL\cA_{U}(W_1|W_2)$ is linear.
\end{corollary}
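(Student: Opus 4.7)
The plan is to recycle the argument of Lemma~\ref{lm:1} with a case split depending on whether the coordinate $i$ at which $W$ might fail to be a wall lies in $J_U$ or not. The core observation is that for $i\in J_U$ one has $\dim(U\cap V_i)=m-1$, while for $i\notin J_U$ one has $V_i\subseteq U$ and therefore $U\cap V_i=V_i$. Since $\gamma$ is parallel and $0\gamma=0$, the preimage $S'=S\cap V_i=(U\cap V_i)\gamma^{-1}$ has cardinality $2^{m-1}$ in the first case and $2^m$ in the second, and it is precisely this extra factor of $2$ that lets us trade stronger anti-invariance for weaker uniformity at the coordinates $i\notin J_U$.

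Assuming for contradiction that $W$ is not a wall, I would fix $i$ with $\pi_i(W)\neq\{0\}$ and $V_i\not\subseteq W$, and first produce a nonzero $u\in W\cap V_i$ exactly as in Lemma~\ref{lm:1}: starting from any $u\in W$ with $\pi_i(u)\neq 0$, Lemma~\ref{lemmaeasy}(ii) gives $\hat{\gamma}_u(S')\subseteq W_1$, so $\hat{\gamma}_u(S')+u\gamma$ is a subset of $W_1\cap V_i$ of size at least $|S'|/2^r$. This is $\geq 2$ provided $r<m-1$ when $i\in J_U$ (hypothesis (1')) and provided $r<m$ when $i\notin J_U$ (hypothesis (1)); in either case a nonzero element of $W_1\cap V_i$ exists, and pulling back via the bijection $\gamma_i$ yields a nonzero $u\in W\cap V_i$. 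With such a $u$, the derivative $\hat{\gamma}_u$ reduces to $\hat{\gamma}_{i,u}$ on $V_i$ and avoids $0$ (since $\gamma_i$ is a permutation and $u\neq 0$), hence $|W_1\cap V_i|\geq |S'|/2^r+1$. This forces $\dim(W_1\cap V_i)\geq m-r$ when $i\in J_U$ and $\dim(W_1\cap V_i)\geq m-r+1=m-(r-1)$ when $i\notin J_U$. Since $V_i\not\subseteq W$ gives $W_1\cap V_i\neq V_i$ via the parallel bijection, the two bounds contradict strong $r$-anti-invariance in the first case and strong $(r-1)$-anti-invariance in the second. Thus $W$ is a wall.

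Once $W$ is known to be a wall, the remainder of the argument copies the last paragraph of Lemma~\ref{lm:1} verbatim: $W_1=W\gamma=W$ by the parallel structure of $\gamma$, and choosing any $i\in J_U$ (nonempty since $U\subsetneq V$), $\bar{v}\in V_i\setminus U$ and $v=\bar{v}\gamma^{-1}\in V_i$ yield $(W+v)\gamma=W_2+\bar{v}$, which forces $W_2$ to also be a wall equal to $W$. Hence $W=W_1=W_2$ and $\cL\cA_U(W_1|W_2)=\cL(W)$ is linear. The only real obstacle I anticipate is the disciplined bookkeeping of the two cases—making precise why the weaker uniformity (1) still closes the argument for coordinates with $V_i\subseteq U$ (namely because $S'$ doubles in size) and why the correspondingly larger dimensional bound on $W_1\cap V_i$ is exactly absorbed by the stronger anti-invariance (2); no new idea beyond Lemma~\ref{lm:1} is required.
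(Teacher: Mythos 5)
Your proposal is correct and follows exactly the same route as the paper: the paper's proof is a two-line case split on whether $i\in J_U$, invoking Lemma~\ref{lm:1} when $i\in J_U$ and noting that $S'=(U\cap V_i)\gamma^{-1}=V_i$ when $i\notin J_U$, so that the bound $|W_1\cap V_i|\ge 2^{m-r}+1$ makes strong $(r-1)$-anti-invariance suffice. You have simply written out the bookkeeping the paper leaves implicit, and all the details (the size of $S'$, the dimension bounds, and the final wall argument for $W_2$) match.
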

\begin{proof}
Let $I=\{i\,:\, \pi_i(W)\ne \{0\}\}\ne\emptyset$. Suppose that there exists $i\in I$ such that $V_i\cap W\subsetneq V_i$.
In this case we can distinguish if $i\notin J_U$ or not. If $i\notin J_U$, then the set $S'=(U\cap V_i)\gamma^{-1}=V_i$. Following the same steps as above, we have $\hat \gamma_{i,u}(S')\subseteq W_1\cap V_i$, and so $|W_1\cap V_i| \ge 2^{m-r}+1$. Then, the $r-1$ strong anti-invariance  is sufficient to obtain a contradiction.

On the other hand, if $i\in J_U$ the proof follows from Lemma~\ref{lm:1}.
\end{proof}

\begin{lemm}\label{lm:2}
Let $\gamma=(\gamma_1,...,\gamma_b)$ be a parallel map over $V=V_1\oplus ...\oplus V_b$, with $V_i=(\FF_2)^m$, $0\gamma=0$. Suppose that for all $i$ the map $\gamma_i$ is
\begin{enumerate}
\item  differentially $2^r$-uniform, with $r<m-1$, 
\item strongly $r$-anti-invariant. 
\end{enumerate}
If $\gamma$ maps $\cL\cA_{U}(W_1|W_2)$ onto $\cL(W)$ (non-trivial), then $W_1$, $W_2$ and $W$ are walls and $W_1=W_2=W$. In particular, $\cL\cA_{U}(W_1|W_2)$ is linear.
\end{lemm}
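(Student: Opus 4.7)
The statement is the natural dual of Lemma \ref{lm:1}, with the roles of the linear and linear-affine partitions swapped. My plan is to deduce it from Lemma \ref{lm:1} applied to the inverse map $\gamma^{-1}$, rather than re-running the three-step derivative bookkeeping of that lemma from scratch.

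First I would check that $\gamma^{-1}$ inherits all the hypotheses required by Lemma \ref{lm:1}. Since $\gamma=(\gamma_1,\ldots,\gamma_b)$ acts componentwise as $v\gamma=v_1\gamma_1\oplus\cdots\oplus v_b\gamma_b$, the inverse $\gamma^{-1}$ is the parallel map with components $(\gamma_1^{-1},\ldots,\gamma_b^{-1})$, and $0\gamma^{-1}=0$ because $0\gamma=0$. For the cryptographic conditions on each $\gamma_i$: a direct computation shows $\gd_{\gamma_i^{-1}}(a,b)=\gd_{\gamma_i}(b,a)$, and the zero row and zero column of the difference distribution table do not affect the maximum over nonzero differences, so differential $2^r$-uniformity is preserved under inversion; moreover, strong $r$-anti-invariance is a symmetric condition on pairs of subspaces $(U',W')$ with $\gamma_i(U')=W'$, so it passes to $\gamma_i^{-1}$ without change. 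Hence all hypotheses of Lemma \ref{lm:1} hold for $\gamma^{-1}$ with the same $r$.

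Next, since $\gamma$ is a bijection, the assumption ``$\gamma$ maps $\cL\cA_U(W_1|W_2)$ onto $\cL(W)$'' is equivalent to ``$\gamma^{-1}$ maps $\cL(W)$ onto $\cL\cA_U(W_1|W_2)$.'' At this point Lemma \ref{lm:1} applies verbatim to $\gamma^{-1}$, with source partition $\cL(W)$ and image partition $\cL\cA_U(W_1|W_2)$, and yields at once that $W$, $W_1$ and $W_2$ are walls and that $W=W_1=W_2$, so $\cL\cA_U(W_1|W_2)$ is a linear partition, which is exactly the claim.

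The only step that requires real care is the stability of the two cryptographic conditions under inversion; both are immediate from the symmetric form of their definitions, but they must be stated explicitly because the hypotheses of Lemma \ref{lm:1} are phrased for $\gamma$ and not for $\gamma^{-1}$. Beyond that, I do not foresee any serious obstacle.
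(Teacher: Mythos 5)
Your reduction is correct, and it takes a genuinely different route from the paper's. The paper proves Lemma~\ref{lm:2} directly, mirroring the proof of Lemma~\ref{lm:1} but invoking part (iii) of Lemma~\ref{lemmaeasy} in place of part (ii): it picks a block $V_i$ on which $W_1$ projects non-trivially, produces a nonzero $u\in W_1\cap V_i$, bounds $|\hat{\gamma}_{i,u}(U\cap V_i)|$ from below to contradict strong $r$-anti-invariance, and then handles $W_2$ by translating by some $\bar v\in V_i\setminus U$. You instead note that $\gamma$ maps $\cL\cA_{U}(W_1|W_2)$ onto $\cL(W)$ if and only if $\gamma^{-1}$ maps $\cL(W)$ onto $\cL\cA_{U}(W_1|W_2)$, and that $\gamma^{-1}$ is again a parallel map fixing $0$ whose components satisfy the same two hypotheses, so Lemma~\ref{lm:1} applies verbatim to $\gamma^{-1}$. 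The two invariance checks are exactly the points requiring care, and you handle both correctly: for a permutation $f$ one has $\gd_{f^{-1}}(a,b)=\gd_f(b,a)$ and the excluded zero row/column contribute nothing, so differential $2^r$-uniformity is preserved; and strong $r$-anti-invariance is symmetric in the pair of subspaces, hence stable under inversion. What your route buys is economy and transparency — it avoids duplicating the counting argument and exhibits the two lemmas as duals. What the paper's direct proof buys is that the intermediate facts are stated for $\gamma$ itself in the forward direction (e.g.\ $S'=U\cap V_i$ rather than a preimage under $\gamma$), which is the form that gets recombined when both directions are interleaved in Lemma~\ref{lm:3}; with your approach those forward-direction statements would still need to be extracted for that later argument.
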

\begin{proof}
Let $I=\{i\,:\, \pi_i(W_1)\ne \{0\}\}$. Clearly $I\ne\emptyset$ since $W_1\gamma=W$ and the space $W$ is not trivial. Suppose that there exists $i\in I$ such that $V_i\cap W_1\subsetneq V_i$.
First of all, there exists $u\in V_i\cap W_1$ nonzero. Indeed, consider any $u\in W_1$ with $\pi_i(u)\ne 0$.
Let $S'=U\cap V_i$, which is a subspace of dimension $m-1$ and thus $|S'|=2^{m-1}$. From Lemma~\ref{lemmaeasy} we have that $\hat \gamma_u(S')\subseteq W$ and $|\hat \gamma_u(S')|\ge |S'|/2^r\ge 2$. \\
Since $u\gamma\in W$ we have that $\hat \gamma_u(S')+u\gamma\subseteq W\cap V_i$ and thus there exists $w\ne 0$ in $W\cap V_i=(W_1\cap V_i)\gamma$, implying that there exists $u$ nonzero in $W_1\cap V_i$. 

Then, let $u\ne 0$ in $W_1\cap V_i$, we have $\hat\gamma_u(S')=\hat\gamma_{i,u}(S')\subseteq W\cap V_i$, which implies $|W\cap V_i| \ge 2^{m-r-1}+1$. So, $\dim(W\cap V_i)\ge m-r$, which is not possible since $\gamma_i$ is strongly $r$-anti-invariant. Thus, $W_1$ is a wall and the same for $W=W_1\g$.

Similarly to Lemma~\ref{lm:1}, we can select $\bar v \in V_i$ for some $i\in J_U$. Also in this case $V_i\nsubseteq W$. Then, $(W_2+\bar v)\gamma=W+\bar v \gamma$. From the fact that $W$ is a wall and $\gamma$ is a parallel map we have that $W_2$ is a wall and $W_2=W$. From this follows that $\cL\cA_{U}(W_1|W_2)$ is a linear partition.
\end{proof}

As before, using the knowledge of the $n-1$ dimensional vector space $U$, we can obtain the following corollary.
\begin{corollary}
Let $\gamma=(\gamma_1,...,\gamma_b)$ be a parallel map over $V=V_1\oplus ...\oplus V_b$, with $V_i=(\FF_2)^m$, $0\gamma=0$. Let $U$ be a vector space of dimension $n-1$ and suppose that for all $i\notin J_U$ the map $\gamma_i$ is
\begin{enumerate}
\item  differentially $2^r$-uniform, with $r<m$, 
\item strongly $r-1$-anti-invariant. 
\end{enumerate}
otherwise for $j \in J_U$ $\gamma_j$
\begin{enumerate}
\item[(1')]  differentially $2^r$-uniform, with $r<m-1$, 
\item[(2')] strongly $r$-anti-invariant. 
\end{enumerate}
If $\gamma$ maps $\cL\cA_{U}(W_1|W_2)$ onto $\cL(W)$ (non-trivial), then $W_1$, $W_1$ and $W$ are walls and $W_1=W_2=W$. In particular, $\cL\cA_{U}(W_1|W_2)$ is linear.
\end{corollary}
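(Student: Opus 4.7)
The plan is to mirror the proof of the analogous corollary following Lemma~\ref{lm:1}, adapting the argument of Lemma~\ref{lm:2} by splitting on whether the index $i$ belongs to $J_U$. As in Lemma~\ref{lm:2}, I would set $I = \{i : \pi_i(W_1) \ne \{0\}\}$, which is nonempty because $W_1\gamma = W$ is non-trivial. Suppose by contradiction that there exists $i \in I$ with $V_i \cap W_1 \subsetneq V_i$.

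If $i \in J_U$, the hypotheses (1') and (2') on $\gamma_j$ for $j \in J_U$ match exactly those of Lemma~\ref{lm:2}, so the same argument applies verbatim and yields a contradiction. The new content is the case $i \notin J_U$: here $V_i \subseteq U$, so the set $S' = U \cap V_i$ used in Lemma~\ref{lm:2} equals all of $V_i$, giving $|S'| = 2^m$ instead of $2^{m-1}$. Following the shift argument from Lemma~\ref{lm:2} (using $u\gamma \in W \cap V_i$) I would first extract a nonzero $u \in W_1 \cap V_i$; then apply the differential uniformity bound to obtain
\[
|\hat{\gamma}_u(S')| \ge |V_i|/2^r = 2^{m-r},
\]
which is at least $2$ since $r < m$. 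Since $0 \notin \hat{\gamma}_u(V_i)$ for $u \ne 0$ (as $\gamma_i$ is a permutation), and $\hat{\gamma}_u(V_i) \subseteq W \cap V_i$ by Lemma~\ref{lemmaeasy}, this forces $|W \cap V_i| \ge 2^{m-r}+1$, i.e.\ $\dim(W \cap V_i) \ge m - r + 1 = m - (r-1)$. Combined with the identity $(W_1 \cap V_i)\gamma_i = W \cap V_i$ (which holds because $\gamma$ is parallel and fixes the origin), this contradicts the strong $(r-1)$-anti-invariance of $\gamma_i$, since $W_1 \cap V_i \subsetneq V_i$ rules out the trivial alternative.

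Having ruled out both cases, $W_1$ must be a wall, and consequently so is $W = W_1\gamma$. The argument that $W_2$ is also a wall and that $W_2 = W_1 = W$ transfers directly from the final paragraph of Lemma~\ref{lm:2}: one picks $\bar{v} \in V_i$ for some $i \in J_U$ and uses $(W_2 + \bar{v})\gamma = W + \bar{v}\gamma$ together with the parallel structure of $\gamma$. The linearity of $\cL\cA_U(W_1|W_2)$ then follows immediately.

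The only mildly delicate point is the bookkeeping when $i \notin J_U$: one must check that the weaker numerical hypothesis $r < m$ (rather than $r < m - 1$) still gives $|\hat{\gamma}_u(S')| \ge 2$ so that the nonzero-element extraction works, and that the strong $(r-1)$-anti-invariance is exactly the right strength to turn the improved inequality $|W \cap V_i| \ge 2^{m-r}+1$ into a contradiction. Neither is an obstacle; both are immediate once the case split is made.
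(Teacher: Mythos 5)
Your proposal is correct and follows exactly the route the paper intends: the paper states this corollary without proof, but your case split on $i\in J_U$ versus $i\notin J_U$ (using $S'=U\cap V_i=V_i$ and the strong $(r-1)$-anti-invariance in the latter case, and Lemma~\ref{lm:2} verbatim in the former) mirrors precisely the proof the paper gives for the analogous corollary following Lemma~\ref{lm:1}. The numerical bookkeeping ($r<m$ giving $|\hat{\gamma}_u(V_i)|\ge 2$, and $|W\cap V_i|\ge 2^{m-r}+1$ forcing $\dim(W\cap V_i)\ge m-(r-1)$) is exactly right.
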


Now, we have to analyze the last case, that is, when a linear-affine partition is mapped into another linear-affine partition by a parallel map.

\begin{lemm}\label{lm:3}
Let $\gamma=(\gamma_1,...,\gamma_b)$ be a parallel map over $V=V_1\oplus ...\oplus V_b$, with $V_i=(\FF_2)^m$, $0\gamma=0$.. Suppose that for all $i$ the map $\gamma_i$ is
\begin{enumerate}
\item  differentially $2^r$-uniform, with $r<m-1$, 
\item strongly $r$-anti-invariant. 
\end{enumerate}

If $\gamma$ maps $\cL\cA_{U}(W_1|W_2)$ onto  $\cL\cA_{U'}(W'_1|W'_2)$ (non-trivial), with $U$ and $U'$ such that $J_U\cap J_{U'}= \emptyset$. Then, all the spaces $W_1, W'_1, W_2, W'_2$ are walls and $W_1=W'_1=W_2=W_2'$. In particular, both partitions are linear.
\end{lemm}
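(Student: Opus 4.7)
The plan is to imitate the two-step bootstrap of Lemmas~\ref{lm:1} and \ref{lm:2}, applying it now to each of the four derivative inclusions provided by Lemma~\ref{lemmaeasy}(iv). The quantitative engine is the lower bound $|S\cap V_i|\ge 2^{m-1}$ for the set $S$ appearing in each inclusion; under the present hypothesis this bound holds for the central set $S=U\cap U'\gamma^{-1}$ and every block index $i$, since $\gamma$ being a parallel map gives $(U'\gamma^{-1})\cap V_i=(U'\cap V_i)\gamma_i^{-1}$, and the disjointness $J_U\cap J_{U'}=\emptyset$ forces at least one of $U\cap V_i$ and $U'\cap V_i$ to equal $V_i$.

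The first step is to prove that $W_1$ is a wall and $W'_1=W_1$ by running the bootstrap of Lemma~\ref{lm:1} on the first inclusion of Lemma~\ref{lemmaeasy}(iv). Assuming for contradiction that $V_i\cap W_1\subsetneq V_i$ at some $i$ with $\pi_i(W_1)\ne 0$, one starts from an arbitrary $u\in W_1$ with $\pi_i(u)\ne 0$, uses $|S\cap V_i|\ge 2^{m-1}$ and the $2^r$-uniformity (with $r<m-1$, so $2^{m-1-r}\ge 2$) to produce a nonzero element of $W'_1\cap V_i$, transfers back through the bijection $\gamma_i\colon W_1\cap V_i\to W'_1\cap V_i$ to get a nonzero $u'\in W_1\cap V_i$, and reapplies the derivative argument with $u'$ to obtain $\dim(W'_1\cap V_i)\ge m-r$, violating the strong $r$-anti-invariance of $\gamma_i$. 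Hence $W_1$ is a wall, and the parallel structure of $\gamma$ forces $W'_1=W_1\gamma=W_1$.

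The second step extracts the cross-containments $W_2\gamma\subseteq W_1$ and $W_1\subseteq W'_2$. For the first, I would apply the second inclusion of Lemma~\ref{lemmaeasy}(iv) with $u\in W_2$ and $S_b=(U+\bar v)\cap U'\gamma^{-1}$, choosing $\bar v\in V_j$ for some $j\in J_U$, so that $|S_b\cap V_j|=2^{m-1}$. A direct parallel-map computation yields, for $s\in S_b\cap V_j$,
\[
\hat\gamma_u(s)=\hat\gamma_{j,u_j}(s)+\sum_{k\ne j}\gamma_k(u_k),
\]
whose $V_j$-component equals $\hat\gamma_{j,u_j}(s)$. Since $W_1\subseteq U$ and $j\in J_U$ give $V_j\cap W_1=0$, this component vanishes on $S_b\cap V_j$, and $2^r$-uniformity forces $u_j=0$. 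Hence $\pi_j(W_2)=0$ for all $j\in J_U$, and the derivative collapses to the constant $u\gamma\in W_1$, yielding $W_2\gamma\subseteq W_1$. A symmetric argument on the third inclusion of Lemma~\ref{lemmaeasy}(iv), with $u\in V_k\subseteq W_1$ (for $k$ in the support $I_1$ of the wall $W_1$) and $\bar v'\in V_{j'}$ for some $j'\in J_{U'}$, shows $\hat\gamma_u(s)=\gamma_k(u)\in W'_2$ for all $s\in S_c\cap V_{j'}$, whence $V_k\subseteq W'_2$ for every $k\in I_1$ and so $W_1\subseteq W'_2$. In particular, $|W_2|\le|W_1|\le|W'_2|$.

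To conclude, I would invoke a block-size count: since $\gamma$ is a bijection of $V$ sending blocks of $\cL\cA_U(W_1|W_2)$ onto blocks of $\cL\cA_{U'}(W'_1|W'_2)$, the multisets of block sizes on the two sides agree, and combined with $|W'_1|=|W_1|$ from Step~1 the only outcome consistent with the above chain of inequalities is $|W_1|=|W_2|=|W'_2|$. The containments then become equalities: $W_2\gamma=W_1$, so $W_2=W_1$ (walls are invariant under parallel maps with $0\gamma=0$), and $W_1=W'_2$. Thus $W_1=W_2=W'_1=W'_2$ is a single wall, and both linear-affine partitions are linear. The main obstacle is Step~2 (and its symmetric counterpart): the parallel-map computation collapses the cross-derivatives to a single constant vector, so one has to carefully leverage the wall structure of $W_1$ from Step~1 to extract both the vanishing $u_j=0$ and the subsequent containment $W_2\gamma\subseteq W_1$.
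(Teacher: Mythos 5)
Your proof is correct. The first half (bootstrapping a nonzero element of $W_1\cap V_i$ from the derivative inclusion $\hat\gamma_u(S)\subseteq W_1'$, then violating strong $r$-anti-invariance via $\dim(W_1'\cap V_i)\ge m-r$) is essentially identical to the paper's argument, including the observation that $J_U\cap J_{U'}=\emptyset$ guarantees $|S\cap V_i|\ge 2^{m-1}$ in every block. Where you diverge is in identifying $W_2$ and $W_2'$: the paper argues directly at the level of blocks, noting that for $\bar v\in V_i\setminus U$ with $i\in J_U$ one has $V_i\subseteq U'$, hence $(W_2+\bar v)\gamma=W_1+\bar v\gamma$ is a coset of a wall, and the preimage of a coset of a wall under a parallel map fixing $0$ is again a coset of the same wall, giving $W_2=W_1$ at once (and symmetrically $W_2'=W_1$). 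You instead extract the one-sided containments $W_2\gamma\subseteq W_1$ and $W_1\subseteq W_2'$ from the cross-derivative inclusions of Lemma~\ref{lemmaeasy}(iv) -- using the componentwise collapse $\hat\gamma_u(s)=\hat\gamma_{j,u_j}(s_j)\oplus\bigoplus_{k\ne j}u_k\gamma_k$ together with $\pi_j(W_1)=0$ for $j\in J_U$ -- and then close the loop with a block-size count. Both routes are valid; the paper's is shorter and avoids the counting step, while yours has the minor merit of making explicit that $\pi_j(W_2)=0$ for $j\in J_U$ and of degrading gracefully in the degenerate case $W_1=\{0\}$ (which the paper must exclude separately at the outset by arguing $|W_1|=|W_1'|=|W_2'|$, a point you never address explicitly but which your Step~2 handles implicitly, since $W_2\gamma\subseteq W_1=\{0\}$ would force the partition to be trivial). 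One cosmetic remark: in Step~2 the vanishing of $\hat\gamma_{j,u_j}$ on a point already forces $u_j=0$ by injectivity of $\gamma_j$; the appeal to $2^r$-uniformity there is unnecessary.
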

\begin{proof}
Let $I=\{i\,:\, \pi_i(W_1)\ne \{0\}\}$. Also in this case $I\ne\emptyset$. Indeed, $W_1\gamma=W_1'$ and $J_U\cap J_{U'}=\emptyset$ implies that there exists $i\in J_U$ and $v\in U\cap V_i$ such that $(W_1+v)\gamma=W_2'+v\gamma$. So, $|W_1|=|W_1'|=|W_2'|$ which implies that $W_1$ cannot be trivial.

 Suppose that there exists $i\in I$ such that $V_i\cap W_1\subsetneq V_i$.
First of all, there exists $u\in V_i\cap W_1$ nonzero. Indeed, consider any $u\in W_1$ with $\pi_i(u)\ne 0$. 
Suppose $U\cap V_i\ne V_i$ (i.e. $i$ is in $J_U$ but not in $J_{U'}$). Then, $(U'\cap V_i)=V_i$ and $S=(U'\cap V_i)\gamma^{-1}=V_i$, implying $S'=S\cap U\cap V_i=U\cap V_i$ (the case $U\cap V_i= V_i$ and $U'\cap V_i\ne V_i$ is similar). From Lemma~\ref{lemmaeasy} we have that $\hat \gamma_u(S')\subseteq W'_1$ and $|\hat \gamma_u(S')|\ge |S'|/2^r\ge 2$. Since $u\gamma\in W'_1$ (recall that $W_1\gamma=W'_1$) we have that $\hat \gamma_u(S')+u\gamma\subseteq W'_1\cap V_i$ and thus there exists $w\ne 0$ in $W'_1\cap V_i=(W_1\cap V_i)\gamma$, implying that there exists $u$ nonzero in $W_1\cap V_i$. \\
Then, let $u\ne 0 \in W_1\cap V_i$,  we have $\hat\gamma_u(S')=\hat\gamma_{i,u}(S')\subseteq W'_1\cap V_i$, which implies $|W'_1\cap V_i| \ge 2^{m-r-1}+1$ and so $\dim(W'_1\cap V_i)\ge m-r$. This is not possible since $\gamma_i$ is strongly $r$-anti-invariant.

Also in this case, since $\gamma$ is a parallel map we have $W_1=W_1'$. Note that if $i\in J_U\cup J_{U'}$ then $V_i\nsubseteq W_1$. Moreover, selecting $\bar v\in V_i$ not in $U$, for some $i\in J_U$, since $V_i\subset U'$ we have $(W_2+\bar v)\gamma=W_1+\bar v\gamma$. As in Lemma~\ref{lm:2} we obtain $W_2=W_1'$. On the other hand, selecting $\bar v'\in V_i$ for some $i\in J_{U'}$ we can follow the steps in the proof of Lemma~\ref{lm:1} and obtain $W_2'=W_1$. From this we have that the two partitions are linear.
\end{proof}

If we know a priori the $n-1$ dimensional vector spaces $U$ and $U'$ then we can obtain the following.
\begin{corollary}
Let $\gamma=(\gamma_1,...,\gamma_b)$ be a parallel map over $V=V_1\oplus ...\oplus V_b$, with $V_i=(\FF_2)^m$, $0\gamma=0$.. Let $U, U'$ be two vector spaces of dimension $n-1$ and let $J_U$ and $J_{U'}$ as above, with $J_U\cap J_{U'}= \emptyset$. Suppose that for all $i\notin J_U\cup J_{U'}$ the map  $\gamma_i$ is
\begin{enumerate}
\item  differentially $2^r$-uniform, with $r<m$, 
\item strongly $r-1$-anti-invariant. 
\end{enumerate}
and for $j\in J_U\cup J_{U'}$ $\gamma_{j}$  is
\begin{enumerate}
\item[(1')] differentially $2^r$-uniform, with $r<m-1$, 
\item[(2')] strongly $r$-anti-invariant. 
\end{enumerate}
If $\gamma$ maps $\cL\cA_{U}(W_1|W_2)$ onto  $\cL\cA_{U'}(W'_1|W'_2)$ (non-trivial), with $U$ and $U'$ such that $J_U\cap J_{U'}= \emptyset$. Then, all the spaces $W_1, W'_1, W_2, W'_2$ are walls and $W_1=W'_1=W_2=W_2'$. In particular, both partitions are linear.
\end{corollary}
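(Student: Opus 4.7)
The plan is to repeat the argument of Lemma~\ref{lm:3} verbatim whenever the index $i$ lies in $J_U\cup J_{U'}$, since there the stronger hypotheses (1') and (2') are available and coincide with the assumptions of Lemma~\ref{lm:3}. The only genuinely new analysis is required for those indices $i\notin J_U\cup J_{U'}$, where we have only the weaker conditions (1) and (2). As in Lemma~\ref{lm:3}, the structure is a proof by contradiction: assume $W_1$ is non-trivial but not a wall, so there exists $i\in I=\{j:\pi_j(W_1)\ne\{0\}\}$ with $V_i\cap W_1\subsetneq V_i$, and derive a contradiction with the strong anti-invariance of $\gamma_i$.

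For an index $i\notin J_U\cup J_{U'}$, the hypothesis gives $V_i\subseteq U$ and $V_i\subseteq U'$; combined with the fact that $\gamma$ is a parallel map and therefore stabilises $V_i$, this yields $V_i\subseteq U'\gamma^{-1}$, so the set $S=U\cap U'\gamma^{-1}$ from Lemma~\ref{lemmaeasy}(iv) satisfies $S\cap V_i=V_i$. Thus, whereas Lemma~\ref{lm:3} worked with a slice of size $2^{m-1}$, here we have a full-size slice of cardinality $2^m$. Running the same derivative argument --- first produce a nonzero $u\in W_1\cap V_i$ exactly as in Lemma~\ref{lm:3} by starting from any $u\in W_1$ with $\pi_i(u)\ne 0$, then observe $\hat\gamma_{i,u}(V_i)\subseteq W'_1\cap V_i$ and apply differential $2^r$-uniformity --- gives $|\hat\gamma_{i,u}(V_i)|\ge 2^{m-r}$. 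Adding $0$ (which lies in the subspace $W'_1\cap V_i$ but not in $\hat\gamma_{i,u}(V_i)$, since $\gamma_i$ is a permutation and $u\ne 0$) yields $|W'_1\cap V_i|\ge 2^{m-r}+1$. Because $W'_1\cap V_i$ is a subspace of $V_i$, its cardinality is a power of $2$, so this inequality upgrades to $\dim(W'_1\cap V_i)\ge m-r+1$, contradicting the strong $(r-1)$-anti-invariance of $\gamma_i$.

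Once $W_1$ has been shown to be a wall, the remaining steps --- the identification $W_1=W'_1$ from $\gamma$ being parallel, followed by $W_2=W'_1$ via a choice $\bar v\in V_i$ with $i\in J_U$ and $W'_2=W_1$ via a choice $\bar v'\in V_i$ with $i\in J_{U'}$ --- proceed exactly as in the closing paragraph of Lemma~\ref{lm:3}, and only invoke S-boxes indexed by $J_U\cup J_{U'}$, where the hypotheses (1')--(2') coincide with those of Lemma~\ref{lm:3}. The main obstacle I anticipate is the rounding step $|W'_1\cap V_i|\ge 2^{m-r}+1\Rightarrow\dim(W'_1\cap V_i)\ge m-r+1$: it is crucial that the single factor of $2$ gained from working with a full-size slice is just enough to meet the weakened anti-invariance hypothesis. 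Since $W'_1\cap V_i$ is automatically a subspace, this rounding is free, and the argument concludes as in Lemma~\ref{lm:3}, forcing $W_1=W'_1=W_2=W'_2$ to be equal walls and both partitions to be linear.
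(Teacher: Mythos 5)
Your proof is correct and takes the approach the paper intends: the paper states this corollary without a written proof, and your argument is exactly the template used in the paper's proof of the analogous corollary to Lemma~\ref{lm:1}, transplanted to Lemma~\ref{lm:3}. The case split between $i\in J_U\cup J_{U'}$ (where hypotheses (1')--(2') let Lemma~\ref{lm:3} run verbatim) and $i\notin J_U\cup J_{U'}$ (where the full slice $S\cap V_i=V_i$ yields $|W'_1\cap V_i|\ge 2^{m-r}+1$, hence $\dim(W'_1\cap V_i)\ge m-r+1$, contradicting strong $(r-1)$-anti-invariance) is precisely the intended argument, and your closing reduction to the wall/parallel-map bookkeeping of Lemma~\ref{lm:3} is sound.
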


From the cryptographic properties used in the previous results, we have that if we consider the case of an APN vectorial Boolean function, that is a differentially $2$-uniform function, then the strong $1$-anti-invariance of the S-boxes $\g_i$'s is sufficient to obtain the characterization of the partitions above.  As shown in \cite{calderinisalerno}, strong $1$-anti-invariance is equivalent to having nonlinearity different from $0$, which is always true for APN functions (see \cite[Proposition~9.15]{carlet}).
However, for even values of $m$, it is known that for $m=4$ there exist no APN permutations \cite{calapn,hou}, for $m=6$ there exists only one APN permutation \cite{dillon} (up to equivalence), and for $m\ge 8$ no APN permutations are known.
So, for $m$ even (except $m=6$), differentially $4$-uniform (bijective) S-boxes are optimal with respect to the differential uniformity.

For applying the previous results, in the case of differentially $4$-uniform functions we need that the S-boxes are also strongly $2$-anti-invariant. However, for some dimensions, such as $m=4$, we have no differentially $4$-uniform function which is strongly $2$-anti-invariant. \\
Let us note that for $m=4$ there exist permutations which are strongly $2$-anti-invariant and also differentially $6$-uniform. With these functions, we would produce the same results given in the previous lemmas. Indeed, for example in the proof of Lemma~\ref{lm:3}, supposing that $W'_1$ is not a wall and considering $i$ such that $\{0\}\ne W'_1\cap V_i\subsetneq V_i$,
we would have $|W'_1\cap V_i| \ge 2^{4-1}/6+1>2$, implying $\dim(W'_1\cap V_i)\ge 2=m-2$ which is in contradiction with the strong 2-anti-invariance. However, for higher dimensions $2$-anti-invariance and differential $6$-uniformity could not be enough. Moreover, we would use known S-boxes with differential uniformity as low as possible, that in the case of even dimensions, with the exception of $m=6$, is $4$. 

For that reason in the following section we investigate the case of differentially $4$-uniform functions, individuating a different property from the anti-invariance, which is related to the linear structures of the components of an S-box. 

\subsection{The case of  differentially $4$-uniform S-boxes}\label{sec:4uni}
 
In this section, we want to focus on differentially $4$-uniform maps. We recall that for a given Boolean function $f:(\FF_2)^m\to\f2$ a nonzero vector $a\in (\FF_2)^m$ is said a {\em linear structure} of $f$ if $\hat f_a$ is constant.

Let $f:(\FF_2)^m\to(\FF_2)^m$ be vectorial Boolean function and consider the following nonlinear measure 
$$
\hat n (f):=\max_{a\in(\FF_2)^m\setminus\{0\}}|\{v\,\in(\FF_2)^m \setminus\{0\}\mid\deg(\langle\hat{f}_a,v\rangle)=0\}|.
$$
With $\hat n (f)$ we are counting for how many components of $f$ the direction $a$ is a linear structure.

Let us denote $V_a=\{v\,\in(\FF_2)^m \mid\deg(\langle\hat{f}_a,v\rangle)=0\}$. We recall the following result from \cite{weak}.

\begin{propositio}\label{prop:5}
Let $f$ be a vectorial Boolean function over $(\FF_2)^m$, and $a\in (\FF_2)^m\setminus\{0\}$. Then, $f(a)+V_a^{\perp}$ is the smallest affine subspace of $(\FF_2)^m$ containing $\Im(\hat{f}_a)$. In particular, $\hat{n}(f)=0$ if and only if there does not exist a proper affine subspace of $(\FF_2)^m$ containing $\Im(\hat{f}_a)$, for all $a\in (\FF_2)^m\setminus\{0\}$.
\end{propositio}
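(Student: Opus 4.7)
The plan is to establish the first claim by a double inclusion, after which the ``in particular'' statement will follow by reading off the two cases $V_a=\{0\}$ versus $V_a\neq\{0\}$. Throughout I tacitly assume $f(0)=0$, in line with the S-box convention adopted elsewhere in the paper; without it every occurrence of $f(a)$ below should be replaced by $\hat f_a(0)=f(a)+f(0)$, but the argument is otherwise unchanged.

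For the inclusion $\Im(\hat f_a)\subseteq f(a)+V_a^{\perp}$, I would invoke the definition of $V_a$ directly: $v\in V_a$ means that $x\mapsto\langle \hat f_a(x),v\rangle$ is a constant Boolean function, and evaluating at $x=0$ identifies this constant as $\langle f(a),v\rangle$. Consequently, for every $x\in(\FF_2)^m$ and every $v\in V_a$, $\langle \hat f_a(x)-f(a),v\rangle=0$, i.e.\ $\hat f_a(x)-f(a)\in V_a^{\perp}$. For minimality, take any affine subspace $z+W$ containing $\Im(\hat f_a)$; since $\hat f_a(0)=f(a)$ already lies in $z+W$, we may recenter $z+W=f(a)+W$. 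Then for any $v\in W^{\perp}$, the map $x\mapsto\langle \hat f_a(x)-f(a),v\rangle$ is identically zero, so $\langle \hat f_a(x),v\rangle=\langle f(a),v\rangle$ is constant in $x$, i.e.\ $v\in V_a$. Hence $W^{\perp}\subseteq V_a$, and passing to orthogonals yields $V_a^{\perp}\subseteq W$, so $f(a)+V_a^{\perp}\subseteq z+W$, giving minimality.

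The second assertion is then immediate: $\hat n(f)=0$ iff $V_a=\{0\}$ for every nonzero $a$, iff $V_a^{\perp}=(\FF_2)^m$ for every such $a$, iff by the first part the smallest affine subspace containing $\Im(\hat f_a)$ is the whole space, i.e.\ no proper affine subspace contains it. I do not foresee a serious obstacle; the only delicate point is the evaluation at $x=0$, used both to pin down the constant $\langle f(a),v\rangle$ and to recenter the ambient affine subspace at $f(a)$, which is precisely where the normalisation $f(0)=0$ enters.
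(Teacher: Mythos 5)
Your proof is correct. Note that the paper itself gives no proof of this proposition: it is quoted verbatim from the cited reference on weak differential uniformity, so there is nothing internal to compare against. Your double-inclusion argument via $W^{\perp}\subseteq V_a$ and $W^{\perp\perp}=W$ is the natural (and standard) one, and your remark that the normalisation $f(0)=0$ is needed to identify the constant $\langle\hat f_a(x),v\rangle$ with $\langle f(a),v\rangle$ (otherwise $f(a)$ must be replaced by $\hat f_a(0)$) is a correct and worthwhile observation about an implicit hypothesis in the statement.
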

From Proposition~\ref{prop:5} we have that if a function $f:(\FF_2)^m\to(\FF_2)^m$ is such that $\hat n(f)=0$, then for any nonzero $a\in (\FF_2)^m$ the elements of $\Im(\hat{f}_a)$ generate the whole space $(\FF_2)^m$.

Thus we can obtain the following results.

\begin{lemm}\label{lm:11}
Let $\gamma=(\gamma_1,...,\gamma_b)$ be a parallel map over $V=V_1\oplus ...\oplus V_b$, $0\gamma=0$, with $V_i=(\FF_2)^m$ and $m\ge 4$. Suppose that for all $i$ the map $\gamma_i$ is
\begin{enumerate}
\item  differentially $4$-uniform, 
\item  $\hat{n}(\gamma_i)=0$. 
\end{enumerate}
If $\gamma$ maps $\cL(W)$ onto $\cL\cA_{U}(W_1|W_2)$ (non-trivial), then $W$ and $W_1$ are walls and $W=W_1=W_2$, implying also that $\cL\cA_{U}(W_1|W_2)$ is linear.
\end{lemm}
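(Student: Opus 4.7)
The proof follows the template of Lemma~\ref{lm:1}, with strong $r$-anti-invariance replaced by the consequence (via Proposition~\ref{prop:5}) that $\hat{n}(\gamma_i)=0$ forces $\Im(\hat{\gamma}_{i,a})$, for every nonzero $a\in V_i$, not to be contained in any proper affine subspace of $V_i$. I assume for contradiction that $W$ is not a wall, i.e.\ that some $i\in I:=\{j:\pi_j(W)\neq\{0\}\}$ satisfies $V_i\cap W\subsetneq V_i$, and derive a contradiction before concluding $W=W_1=W_2$.

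Step 1 (existence of a nonzero $u\in V_i\cap W$) runs verbatim as in Lemma~\ref{lm:1}: for any $u\in W$ with $\pi_i(u)\neq 0$, applying Lemma~\ref{lemmaeasy}(ii) to the set $S\cap V_i=(U\cap V_i)\gamma^{-1}$ of cardinality at least $2^{m-1}$, combined with differential $4$-uniformity, gives $|\hat{\gamma}_u(S\cap V_i)|\geq 2^{m-3}\geq 2$ (this is where $m\geq 4$ enters). Since $\hat{\gamma}_u(S\cap V_i)+u\gamma\subseteq W_1\cap V_i$ contains $0$ and has the same cardinality as $\hat{\gamma}_u(S\cap V_i)$, it contains a nonzero element, which pulled back through $\gamma_i$ (using $(W\cap V_i)\gamma_i=W_1\cap V_i$) produces the desired $u\in V_i\cap W$.

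Step 2 is where $\hat{n}(\gamma_i)=0$ enters. For $u\in V_i$ a direct computation gives $\hat{\gamma}_u(v)=\hat{\gamma}_{i,u}(\pi_i(v))\in V_i$ for every $v\in V$, so Lemma~\ref{lemmaeasy}(ii) yields
\[
\hat{\gamma}_{i,u}(\pi_i(S))\subseteq W_1\cap V_i,\qquad \hat{\gamma}_{i,u}(\pi_i(S'))\subseteq W_2\cap V_i,
\]
where $S=U\gamma^{-1}$ and $S'=(U+\bar v)\gamma^{-1}$. A short linear-algebra check shows that choosing $\bar v\in V_i\setminus U$ whenever $V_i\not\subseteq U$ (and arbitrarily otherwise) makes $\pi_i(S)\cup\pi_i(S')=V_i$, so that $\Im(\hat{\gamma}_{i,u})\subseteq(W_1\cap V_i)\cup(W_2\cap V_i)$. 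If $V_i\not\subseteq U$, the inclusions $W_1,W_2\subseteq U$ push this union into the proper subspace $U\cap V_i\subsetneq V_i$, contradicting Proposition~\ref{prop:5}. If $V_i\subseteq U$, then already $\pi_i(S)=V_i$ yields $\Im(\hat{\gamma}_{i,u})\subseteq W_1\cap V_i$; either $W_1\cap V_i\subsetneq V_i$ again contradicts Proposition~\ref{prop:5}, or $W_1\cap V_i=V_i$ forces $W\cap V_i=V_i$, contrary to our assumption. Either way $W$ must be a wall.

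Once $W$ is a wall, $W=W_1$ is immediate from the parallel structure of $\gamma$, and the final equality $W_2=W$ follows exactly as in the last paragraph of the proof of Lemma~\ref{lm:1}: pick $i\in J_U$ with $V_i\not\subseteq W$, choose $\bar v\in V_i\setminus U$, and set $v=\bar v\gamma^{-1}\in V_i$; then $(W+v)\gamma=W_2+\bar v$ forces $W_2=W$. The main obstacle compared to Lemma~\ref{lm:1} is Step 2: without strong $r$-anti-invariance, a dimension count on $W_1\cap V_i$ alone no longer suffices, and one must recover the \emph{full} image $\Im(\hat{\gamma}_{i,u})$ by working with the projection $\pi_i(S)$ (not merely $S\cap V_i$) and, when $V_i\not\subseteq U$, also with the affine translate $\pi_i(S')$, before Proposition~\ref{prop:5} can be invoked.
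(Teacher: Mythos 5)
Your proof is correct and follows essentially the same route as the paper's: assume $W$ is not a wall, produce a nonzero $u\in W\cap V_i$ as in Lemma~\ref{lm:1}, split on whether $V_i\subseteq U$, and in either case trap $\Im(\hat{\gamma}_{i,u})$ inside a proper subspace of $V_i$ (namely $W_1\cap V_i$ or $U\cap V_i$, using $W_1,W_2\subseteq U$ and $S\cup S'=V$) to contradict $\hat{n}(\gamma_i)=0$ via Proposition~\ref{prop:5}. Your use of the projections $\pi_i(S)$, $\pi_i(S')$ in place of the paper's intersections with $V_i$, and your omission of the paper's intermediate bounds $\dim(W_{1,2}\cap V_i)\ge m-2$ (which are not actually needed for the final contradiction here), are only cosmetic differences.
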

\begin{proof}
As before, let $I=\{i\,:\, \pi_i(W_1)\ne \{0\}\}\ne\emptyset$. Suppose that there exists $i\in I$ such that $V_i\cap W\subsetneq V_i$. If $V_i$ is also such that $V_i\cap U=V_i$ (i.e. $i\notin J_U$), then the properties on $\gamma_i$ are sufficient to obtain a contradiction. Indeed, we will have that there exist $u\ne 0$ in $W\cap V_i$ and since $V_i\cap U=V_i$, $$\hat{\gamma}_{i,u}(V_i\cap U)=\Im(\hat{\gamma}_{i,u})\subset W_1\cap V_i.$$ From Proposition \ref{prop:5}, $\hat{n}(\gamma_i)=0$ implies $W_1\cap V_i=V_i$.
Thus for all such indexes $i$ we have $V\cap V_i= W_1\cap V_i=V_i$.

Suppose that $V_i\cap U\ne V_i$, then $\dim(V_i\cap U)=m-1$.
Following the same steps of the proof of Lemma~\ref{lm:1}, we would have that there exists a nonzero $u\in W\cap V_i$ and  $\hat{\gamma}_{i,u}(S)\subset W_1\cap V_i$, with $S=(U\cap V_i)\gamma^{-1}$.\\
 Note that $S$ contains, in this case, half of the elements of $V_i$. 
Thus, $\dim(W_1\cap V_i)\ge m-2$ and also $\dim(W\cap V_i)\ge m-2$.\\
Now, since there exists $\bar v \notin U$ with $\bar v\in V_i$, we can choose $s\in V_i$ such that $s\gamma=\bar v$ and then, $(W+s)\gamma=W_2+s\gamma$. Since $\gamma$ is a parallel map we have $((W\cap V_i)+s)\gamma=(W_2\cap V_i)+s\gamma$. This implies that 
$\dim(W_2\cap V_i)\ge m-2$.
Moreover, for all $u\in W\cap V_i$ we have $\hat{\gamma}_{i,u}(S')\subset W_2\cap V_i$, with $S'=((U+s\gamma)\cap V_i)\gamma^{-1}$.  Thus, from the definition of $S$ and $S'$ we have that $S\cup S'=V_i$ and $$\Im(\hat{\gamma}_{i,u})\subset (W_1\cup W_2)\cap V_i\subseteq U\cap V_i\ne V_i.$$ Which is not possible by Proposition \ref{prop:5}. 

In a similar way as in Lemma~\ref{lm:1} we can conclude that all the spaces are walls and $\cL\cA_{U}(W_1|W_2)$ is linear.
\end{proof}

\begin{lemm}\label{lm:22}
Let $\gamma=(\gamma_1,...,\gamma_b)$ be a parallel map over $V=V_1\oplus ...\oplus V_b$, $0\gamma=0$, with $V_i=(\FF_2)^m$ and $m\ge 4$. Suppose that for all $i$ $\gamma_i$ is
\begin{enumerate}
\item  differentially $4$-uniform, 
\item  $\hat{n}(\gamma_i)=0$. 
\end{enumerate}
If $\gamma$ maps $\cL\cA_{U}(W_1|W_2)$ onto $\cL(W)$  (non-trivial), then $W$ and $W_1$ are walls and $W=W_1=W_2$, implying also that $\cL\cA_{U}(W_1|W_2)$ is linear.
\end{lemm}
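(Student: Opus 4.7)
The plan is to mirror the argument of Lemma~\ref{lm:11}, interchanging the roles of the source and target partitions and using Lemma~\ref{lemmaeasy}(iii) in place of (ii).

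First I would set $I=\{i:\pi_i(W_1)\neq\{0\}\}$. Since $W_1\gamma=W$ is non-trivial, so is $W_1$, hence $I\neq\emptyset$. Assume for contradiction that $W_1$ is not a wall, and pick $i\in I$ with $W_1\cap V_i\subsetneq V_i$; bijectivity of $\gamma_i$ on $V_i$ gives the companion inclusion $W\cap V_i=(W_1\cap V_i)\gamma_i\subsetneq V_i$.

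When $i\notin J_U$ (so $V_i\subseteq U$), take $u\in W_1$ with $u_i:=\pi_i(u)\neq 0$ and apply Lemma~\ref{lemmaeasy}(iii) to obtain $\hat\gamma_u(V_i)\subseteq W$. The parallel-map identity
\begin{equation*}
\hat\gamma_u(s)+u\gamma=\hat\gamma_{i,u_i}(s)+u_i\gamma_i\qquad(s\in V_i)
\end{equation*}
then places $\Im(\hat\gamma_{i,u_i})$ inside the affine subspace $(W\cap V_i)+u_i\gamma_i$, which is proper in $V_i$ by the previous paragraph. Proposition~\ref{prop:5} together with $\hat n(\gamma_i)=0$ then yields a contradiction.

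The harder case is $i\in J_U$, where $\dim(V_i\cap U)=m-1$ and a single direction $u\in W_1$ controls $\hat\gamma_u$ only on $V_i\cap U$: indeed, for $u\in W_1\setminus W_2$ and $y\in(V_i\cap U)+\bar v$, the points $y$ and $y+u$ lie in distinct blocks of $\cL\cA_U(W_1|W_2)$, so $\hat\gamma_u(y)\notin W$. My plan is first to use differential $4$-uniformity together with the displayed identity, applied to $s\in V_i\cap U$ (of size $2^{m-1}$, hence yielding an image of size at least $2^{m-3}\geq 2$ for $m\ge 4$), to produce nonzero elements of $W\cap V_i$ and, via $\gamma_i$, of $W_1\cap V_i$; then to apply the symmetric argument with $u'\in W_2$ on the coset $(V_i\cap U)+\bar v$, which yields $\hat\gamma_{i,u'_i}((V_i\cap U)+\bar v)\subseteq W\cap V_i$ together with a nonzero element of $W_2\cap V_i$; and finally to combine the two half-$V_i$ inclusions into a single one on all of $V_i$, forcing $\Im(\hat\gamma_{i,u})$ into the proper subspace $W\cap V_i$ and contradicting Proposition~\ref{prop:5}. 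The principal technical obstacle is this last combination step: one must identify a common direction $u\in V_i$ for which both the $W_1$-control on $V_i\cap U$ and the $W_2$-control on $(V_i\cap U)+\bar v$ apply simultaneously---equivalently, to argue at an early stage that $W_1\cap V_i$ and $W_2\cap V_i$ share a nonzero vector.

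Once $W_1$ is shown to be a wall, $W=W_1\gamma$ is a wall as well, since $\gamma$ is parallel. For $W_2$ I would then pick $\bar v\in V_i\setminus U$ for some $i\in J_U$ and exploit the block identity $(W_2+\bar v)\gamma=W+\bar v\gamma_i$ exactly as in the closing paragraph of the proof of Lemma~\ref{lm:2}: because $W$ is a wall and $\gamma$ is a parallel map, $W_2$ must be a wall with $W_2=W$. Hence $W_1=W_2=W$ and $\cL\cA_U(W_1|W_2)$ is linear.
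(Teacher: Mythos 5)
Your overall skeleton is the same as the paper's: split on whether $i\in J_U$, use Lemma~\ref{lemmaeasy}(iii) plus the parallel-map identity to trap $\Im(\hat\gamma_{i,u_i})$ in a proper (affine) subspace of $V_i$, and invoke Proposition~\ref{prop:5}. The case $i\notin J_U$ is handled correctly (if anything, more cleanly than in the paper, since you work directly with the displayed identity instead of first extracting a nonzero element of $W_1\cap V_i$). But in the case $i\in J_U$ you explicitly flag, and do not close, the decisive step: producing a single nonzero $u\in W_1\cap W_2\cap V_i$ so that the two half-$V_i$ inclusions apply to the \emph{same} direction. Without that, $\Im(\hat\gamma_{i,u})$ is never confined to $W\cap V_i$ and no contradiction with $\hat n(\gamma_i)=0$ is reached, so as written the proof is incomplete. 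A secondary weakness is your plan to obtain a nonzero element of $W_2\cap V_i$ by ``the symmetric argument with $u'\in W_2$'': to even start that argument you would need to know $\pi_i(W_2)\neq\{0\}$, which is not given.

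The paper closes the gap by a dimension count that your own estimates essentially already supply. From a nonzero $u\in W_1\cap V_i$ and differential $4$-uniformity, $|\hat\gamma_{i,u}(U\cap V_i)|\ge 2^{m-1}/4=2^{m-3}$, and since $0\notin\hat\gamma_{i,u}(U\cap V_i)$ (as $\gamma_i$ is a permutation) one gets $|W\cap V_i|\ge 2^{m-3}+1$, i.e.\ $\dim(W\cap V_i)\ge m-2$, hence also $\dim(W_1\cap V_i)=\dim\bigl((W\cap V_i)\gamma_i^{-1}\bigr)\ge m-2$. For $W_2$ there is no need to rerun the differential argument: the block identity $\bigl((W_2\cap V_i)+\bar v\bigr)\gamma=(W\cap V_i)+\bar v\gamma$, which you invoke only at the very end, already forces $\dim(W_2\cap V_i)=\dim(W\cap V_i)\ge m-2$. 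Since $W_1,W_2\subseteq U$, both $W_1\cap V_i$ and $W_2\cap V_i$ are subspaces of $U\cap V_i$, which has dimension $m-1$ for $i\in J_U$; therefore $\dim(W_1\cap W_2\cap V_i)\ge 2(m-2)-(m-1)=m-3\ge 1$ exactly when $m\ge 4$ --- this is where the hypothesis $m\ge 4$ enters. Any nonzero $u$ in that intersection gives $\hat\gamma_{i,u}(U\cap V_i)\subseteq W\cap V_i$ and $\hat\gamma_{i,u}\bigl((U+\bar v)\cap V_i\bigr)\subseteq W\cap V_i$ simultaneously, whence $\Im(\hat\gamma_{i,u})\subseteq W\cap V_i\subsetneq V_i$, contradicting $\hat n(\gamma_i)=0$. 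With this inserted, the rest of your argument (wallness of $W$, then of $W_2$ via the block identity, and $W_1=W_2=W$) goes through as you describe.
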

\begin{proof}
Let $I=\{i\,:\, \pi_i(W_1)\ne \{0\}\}\ne\emptyset$. Suppose that there exists $i\in I$ such that $V_i\cap W_1\subsetneq V_i$. If $V_i$ is also such that $V_i\cap U=V_i$, then the conditions on $\gamma_i$ are sufficient to obtain a contradiction. Indeed, following the proof of Lemma~\ref{lm:2}, there exists a nonzero $u\in W_1\cap V_i$ and we will have $\Im(\hat{\gamma}_{i,u})\subset W\cap V_i$. Since $\hat{n}(\gamma_i)=0$, this implies $W\cap V_i=V_i$.
Thus for all these indexes $i$ we have $W\cap V_i= W_1\cap V_i=V_i$.

Suppose that $V_i\cap U\ne V_i$, then $\dim(V_i\cap U)=m-1$.
Following the same steps of the proof of Lemma~\ref{lm:2},  there exists $u\in W_1\cap V_i$ nonzero and we would have that $\hat{\gamma}_{i,u}(S)\subset W\cap V_i$ with $S=(U\cap V_i)$. The set $S$ contains half of the elements of $V_i$. 
As in Lemma~\ref{lm:2}, this implies $\dim(W\cap V_i)\ge m-2$ and thus also $\dim(W_1\cap V_i)\ge m-2$.\\
Now, we can choose $\bar v\in V_i$ such that $\bar v \notin U$ and then, $(W_2+\bar v)\gamma=W+\bar v\gamma$. Since $\gamma$ is a parallel map we have $(W_2\cap V_i+\bar v)\gamma=(W\cap V_i)+\bar v\gamma$. This implies that 
$\dim(W_2\cap V_i)\ge m-2$.
Moreover, for all $u\in W_2\cap V_i$ we have $\hat{\gamma}_{i,u}(S')\subset W\cap V_i$, with $S'=(U+\bar v)\cap V_i$.
Note that $S\cup S'=V_i$. 

So, $m\ge4$ 
implies that $W_1\cap W_2\cap V_i\ne \{0\}$, which means that there exists $u \in W_1\cap W_2\cap V_i$ nonzero.
  Thus, $\Im(\hat{\gamma}_{i,u})\subset W\cap V_i\ne V_i$. Which is not possible since $\hat{n}(\gamma_i)=0$.
  
As in Lemma~\ref{lm:2}, the spaces are walls and $\cL\cA_{U}(W_1|W_2)$ is linear.
\end{proof}

Mixing the proofs of the two lemmas above we can obtain the following result.
\begin{lemm}\label{lm:33}
Let $\gamma=(\gamma_1,...,\gamma_b)$ be a parallel map over $V=V_1\oplus ...\oplus V_b$, $0\gamma=0$, with $V_i=(\FF_2)^m$ and $m\ge 4$. Suppose that for all $i$ $\gamma_i$ is
\begin{enumerate}
\item  differentially $4$-uniform, 
\item  $\hat{n}(\gamma_i)=0$. 
\end{enumerate}

If $\gamma$ maps $\cL\cA_{U}(W_1|W_2)$ onto  $\cL\cA_{U'}(W'_1|W'_2)$ (non-trivial), with $U$ and $U'$ such that $J_U\cap J_{U'}= \emptyset$. Then $W_{1,2}$ and $W'_{1,2}$ are walls and $W_1=W'_1=W_2=W'_2$. In particular, both partitions are linear.
\end{lemm}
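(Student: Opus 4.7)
The plan is to adapt the arguments of Lemmas~\ref{lm:11} and~\ref{lm:22}, combining them in the manner of Lemma~\ref{lm:3}. The hypothesis $J_U\cap J_{U'}=\emptyset$ forces every index $i$ into exactly one of three disjoint classes: (a) $i\notin J_U\cup J_{U'}$, so $V_i\subset U\cap U'$; (b) $i\in J_U\setminus J_{U'}$, so $V_i\subset U'$ while $V_i\cap U$ has codimension $1$ in $V_i$; (c) $i\in J_{U'}\setminus J_U$, symmetric to (b).

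First, as in Lemma~\ref{lm:3}, I verify that $I=\{i:\pi_i(W_1)\ne\{0\}\}\ne\emptyset$: picking $i\in J_{U'}$ (so $V_i\subset U$) and $v\in V_i$ with $v\gamma\notin U'$, the block $W_1+v$ of $\cL\cA_U(W_1\mid W_2)$ is forced to map to the $W_2'$-type image $W_2'+v\gamma$, giving $|W_1|=|W_2'|\ne 0$.

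Now assume for contradiction that some $i\in I$ satisfies $V_i\cap W_1\subsetneq V_i$. In class (a), for any $u\in W_1$ with $\pi_i(u)\ne 0$ the relation $V_i\subset U\cap U'\gamma^{-1}$ combined with Lemma~\ref{lemmaeasy}(iv) gives $\hat\gamma_u(V_i)\subseteq W_1'$; the usual shift computation shows $\hat\gamma_u(V_i)+u\gamma$ lies in $V_i\cap W_1'$, contains $0$, and is an affine translate of $\Im(\hat\gamma_{i,\pi_i(u)})$, which by $\hat n(\gamma_i)=0$ and Proposition~\ref{prop:5} spans $V_i$; hence $W_1'\cap V_i=V_i$ and so $W_1\cap V_i=V_i$, a contradiction. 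In class (b), the restriction to $V_i$ shows $\gamma_i$ maps $\cL\cA_{V_i\cap U}(W_1\cap V_i\mid W_2\cap V_i)$ onto $\cL(W_1'\cap V_i)$; I then follow the argument of Lemma~\ref{lm:22}. The shift trick applied to some $u\in W_1$ with $\pi_i(u)\ne 0$ forces $W_1'\cap V_i\ne\{0\}$, and hence $W_1\cap V_i\ne\{0\}$; using then a nonzero element of $W_1\cap V_i$ together with the bijection $(W_2\cap V_i+\bar v)\gamma_i=(W_1'\cap V_i)+\bar v\gamma_i$ yields $\dim(W_1\cap V_i),\dim(W_2\cap V_i)\ge m-2$. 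Since both subspaces live inside $V_i\cap U$, which has dimension $m-1$, the hypothesis $m\ge 4$ guarantees that $W_1\cap W_2\cap V_i$ contains a nonzero $u$; then $\Im(\hat\gamma_{i,u})\subseteq W_1'\cap V_i$, and Proposition~\ref{prop:5} forces $W_1'\cap V_i=V_i$, against the assumption. Class (c) is symmetric and reduces to Lemma~\ref{lm:11}.

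Having shown $W_1$ is a wall, the parallel structure of $\gamma$ combined with $W_1\gamma=W_1'$ yields $W_1'=W_1$. To pin down $W_2$, pick $i\in J_U$ and $\bar v\in V_i\setminus U$; since $V_i\subset U'$, the block $W_2+\bar v$ must map to the $W_1'$-type block $W_1'+\bar v\gamma$, and a second application of the class-(b) argument (with $W_2$ playing the role of $W_1$) gives $W_2=W_1'=W_1$. The symmetric argument using $i\in J_{U'}$ and the Lemma~\ref{lm:11} template yields $W_2'=W_1$. All four spaces thus coincide with a common wall and both partitions are linear. The main difficulty lies in class (b): producing the nonzero element of $W_1\cap W_2\cap V_i$ requires the dimension count inside $V_i\cap U$ rather than inside $V_i$, and the full strength of $\hat n(\gamma_i)=0$ is needed to turn the containment $\Im(\hat\gamma_{i,u})\subseteq W_1'\cap V_i$ into the equality $W_1'\cap V_i=V_i$.
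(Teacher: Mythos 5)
Your proof is correct and follows essentially the same route as the paper's: the same three-way split on membership in $J_U$ and $J_{U'}$, the same use of Lemma~\ref{lemmaeasy}(iv) and Proposition~\ref{prop:5}, and the same $m\ge 4$ dimension count inside $V_i\cap U$ to produce a nonzero element of $W_1\cap W_2\cap V_i$. The only cosmetic difference is that in your class (c) the paper reaches the contradiction slightly more directly, via $\Im(\hat{\gamma}_{i,u})\subseteq (W'_1\cup W'_2)\cap V_i\subseteq U'\cap V_i\subsetneq V_i$, without needing the dimension count from the Lemma~\ref{lm:11} template.
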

\begin{proof}
As before, let $I=\{i\,:\, \pi_i(W_1)\ne \{0\}\}\ne\emptyset$. Suppose that there exists $i\in I$ such that $V_i\cap W_1\subsetneq V_i$. We can have the following cases:
\begin{enumerate}
\item $i \notin J_U\cup J_{U'}$;
\item $i$ in $J_U$ but not in $J_{U'}$;
\item$i$ in $J_{U'}$ but not in $J_{U}$.
\end{enumerate}
\noindent{\bf CASE 1.}\\
We have that $U\cap V_i=U'\cap V_i=V_i$. 
Then, as in Lemma~\ref{lm:3} we can determine a nonzero element $u\in V_i\cap W_1$ and $$\hat{\gamma}_{i,u}(V_i\cap U)=\Im(\hat{\gamma}_{i,u})\subset W_1\cap V_i.$$ Which is not possible since $\hat{n}(\gamma_i)=0$.

\noindent{\bf CASE 2.}\\
We have that $U\cap V_i=V_i$ and $U'\cap V_i\ne V_i$. Then there exists $\bar v'\in V_i$ with  $\bar v'\notin U'$.  We can define the sets $S=(U'\cap V_i)\gamma^{-1}$ and $S'=((U'+\bar v')\cap V_i)\gamma^{-1}$, both containing  $2^{m-1}$ elements and $S\cup S'=V_i$.

Then, as in Lemma~\ref{lm:3}, we can determine a nonzero element $u\in V_i\cap W_1$ with $\hat{\gamma}_{i,u}(S)\subset W'_1\cap V_i$ and $\hat{\gamma}_{i,u}(S')\subset W'_2\cap V_i$. Thus $$\Im(\hat{\gamma}_{i,u})\subset U'\cap V_i\ne V_i,$$ which is not possible since $\hat{n}(\gamma_i)=0$.

\noindent{\bf CASE 3.}\\
We have that $U\cap V_i\ne V_i$ and $U'\cap V_i= V_i$. Then, there exists $\bar v\in V_i$ with  $\bar v\notin U$. Let $S=(U\cap V_i)$ and $S'=((U+\bar v)\cap V_i)$, both containing  $2^{m-1}$ elements and $S\cup S'=V_i$.

As in Lemma~\ref{lm:3} we can determine a nonzero element $u\in V_i\cap W_1$ so that $\hat{\gamma}_{i,u}(S)\subset W'_1\cap V_i$, implying $\dim(W_1\cap V_i)=\dim(W'_1\cap V_i)\ge m-2$.

Moreover, since $\bar v\in V_i$ we have $\bar v\gamma \in U'$ and thus $(W_2+\bar v)\gamma=W'_1+\bar v\gamma$. Since $\gamma$ is a parallel map we have $((W_2\cap V_i)+\bar v)\gamma=(W'_1\cap V_i)+\bar v\gamma$. This implies that 
$\dim(W_2\cap V_i)\ge m-2$.

So, $m\ge4$ 
implies that $W_1\cap W_2\cap V_i\ne \{0\}$, this means that there exists $u \in W_1\cap W_2\cap V_i$ nonzero. For such an element $u$ we obtain 
$$
\Im(\hat{\gamma}_{i,u})\subset W'_1\cap V_i\ne V_i. 
$$
Which is not possible since $\hat{n}(\gamma_i)=0$.

The fact that all the spaces coincide and are walls follows similarly to Lemma~\ref{lm:3}.
\end{proof}

\begin{remar}
In dimension $4$ there exist several permutations which are differentially $4$-uniform and such that $\hat n (f)=0$.  Moreover, for any dimension $m$ the inversion map $\gamma:x\mapsto x^{-1}$ is differentially $4$-uniform if $m$ is even and APN if $m$ is odd, and it is such that $\hat{n}(\gamma)=0$ for any dimension $m$ (see for instance \cite[Corollary 6]{Kyu}).
\end{remar}

\subsection{Primitivity of $\Gamma(\cC)$ in the case of subgroups of order $2^{n-1}$}

Using the results above we can define the following property for a key-schedule of a tb cipher.

\begin{definitio}\label{def:3rai}
Let $\cC$ be a tb cipher over $V$, with key space $\cK$. Let $\ell$ be the number of the rounds and $\Phi:\cK\to V^\ell$ be the key-schedule of $\cC$. Let $T_1,T_2$ and $T_3$ be three subgroups of $T(V)$ of order at least $2^{n-1}$ and $U_j=\{v\,:\,\gs_v\in T_j\}$ for $j=1,2,3$. 

Suppose that there exists $2\le i\le \ell-1 $ such that for any $(k_{i-1},k_i,k_{i+1})\in U_1\times U_2\times U_3$ the element 
$(\bar{k}_1,...,\bar{k}_{i-2},k_{i-1},k_i,k_{i+1},\bar{k}_{i+2},...,\bar{k}_\ell)$ is in $\Im(\Phi)$, where the values $\bar{k}_j$'s are fixed. Then, we say that the key-schedule $\Phi$ is {\em 3-round almost-independent at round $i$} with respect to $T_1,T_2$ and $T_3$.
\end{definitio}

An easy example of a 3-round almost-independent key-schedule is given by the following. Let $f_1,f_2$ and $f_3$ be three fixed linear Boolean functions from $(\FF_{2})^n$ to $\f2$. Then, a map $$
\begin{aligned}\Phi:(\f2)^{4n}&\to(\f2)^{\ell n}\\
(k_1,k_2,k_3,k_4)&\mapsto (k_1,k_2,k_3,\Phi'(f_1(k_1),f_2(k_2),f(k_3),k_4)),\end{aligned}$$ with $\Phi'$ any function from $(\f2)^{3+n}$ to $(\f2)^{(\ell-3)n}$, satisfies the 3-round almost-independent property (the groups $T_1,T_2,T_3$ depend on the functions $f_1,f_2,f_3$).\\

For the mixing layer it is easy to extend Proposition \ref{prop:ml} to the case of linear-affine partitions.
\begin{propositio}\label{prop:ml2}
Let $\gl$ be a linear permutation of $V$, and let $\cL\cA_{U}(W_1|W_2)$ be a linear-affine partition of $V$. Then $\cL\cA_{U}(W_1|W_2)\gl=\cL\cA_{U\gl}(W_1\gl |W_2\gl)$.
\end{propositio}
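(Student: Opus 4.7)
The plan is to verify the identity by applying $\lambda$ blockwise and using linearity, which reduces everything to a direct check at the level of the defining data of the two partitions.

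First I would recall that by definition
\[
\cL\cA_{U}(W_1|W_2)=\{W_1+v : v\in U\}\cup\{(W_2+\bar v)+v : v\in U\}
\]
for some (any) $\bar v\in V\setminus U$. Since $\lambda$ is linear, applying it to a typical block of the first family yields
\[
(W_1+v)\lambda = W_1\lambda + v\lambda,
\]
and as $v$ ranges over $U$, the vector $v\lambda$ ranges over $U\lambda$. Hence $\{(W_1+v)\lambda : v\in U\}=\{W_1\lambda+w : w\in U\lambda\}$, which matches the linear part of $\cL\cA_{U\lambda}(W_1\lambda | W_2\lambda)$.

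For the second family I would compute similarly:
\[
\big((W_2+\bar v)+v\big)\lambda = (W_2\lambda+\bar v\lambda)+v\lambda,
\]
and as $v$ ranges over $U$ the vector $v\lambda$ ranges over $U\lambda$. To identify this with the affine part of $\cL\cA_{U\lambda}(W_1\lambda|W_2\lambda)$, the only nontrivial point is to check that $\bar v\lambda$ is a legitimate coset representative, i.e.\ that $\bar v\lambda\in V\setminus U\lambda$; this is where I expect the only (very small) obstacle, and it is immediate from the injectivity of $\lambda$: if $\bar v\lambda\in U\lambda$ then $\bar v\in U$, contradicting the choice of $\bar v$. By the remark following the definition of a linear-affine partition, the partition does not depend on which representative outside $U\lambda$ is chosen, so this completes the identification.

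Combining the two halves gives $\cL\cA_{U}(W_1|W_2)\lambda=\cL\cA_{U\lambda}(W_1\lambda|W_2\lambda)$, which is the claimed equality.
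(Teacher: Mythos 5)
Your proof is correct and complete; the paper itself gives no proof of this proposition, merely noting that it is an easy extension of Proposition~\ref{prop:ml}, and your blockwise computation (including the check that $\bar v\gl\notin U\gl$ by injectivity and the appeal to independence of the coset representative) is exactly the intended argument.
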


From the analysis reported above we are able to state the following result. 

\begin{theore}\label{th:mainme}
Let $\cC$ be a tb cipher with a key-schedule function $\Phi$ 3-round almost-independent at round $i$ with respect to $T_1,T_2$ and $T_3$, for some $2\le i\le \ell-1 $ and $T_1,T_2$ and $T_3$ subgroups of $T(V)$ of order at least $2^{n-1}$. Let $U_j=\{v\,:\,\gs_v\in T_j\}$ for $j=1,2,3$. If the parallel S-boxes of round $i$ and round $i+1$, $\gamma_i$ and $\gamma_{i+1}$, are composed by S-boxes which are
\begin{enumerate}
\item  differentially $2^r$-uniform, with $r<m-1$, 
\item strongly $r$-anti-invariant; 
\end{enumerate}
\begin{center}
or
\end{center}
\begin{enumerate}
\item[(i)]  differentially $4$-uniform, 
\item[(ii)]  $\hat{n}(\gamma_{i})=\hat{n}(\gamma_{i+1})=0$, 
\end{enumerate}
$\gl_i$ is strongly proper and $J_{U_2\gl_i^{-1}}\cap J_{U_{1}}=\emptyset$, then there do not exist $\cA$ and $\cB$ non-trivial partitions such that for all $k\in\cK$ the map $\tau_k$ maps $\cA$ onto $\cB$. In particular, $\Gamma(\cC)$ is primitive.
\end{theore}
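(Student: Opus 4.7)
The plan is to argue by contradiction along the same lines as the proof of Theorem~\ref{th:gam}, but replacing the full translation group with the subgroups $T_1,T_2,T_3$ of order at least $2^{n-1}$ and therefore using the characterization of partitions stabilized by such a subgroup provided by Proposition~\ref{prop:blocchi2}, together with the new Lemmas~\ref{lm:1}--\ref{lm:3} (or \ref{lm:11}--\ref{lm:33} in the differentially $4$-uniform alternative) on parallel maps between linear and linear-affine partitions.

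First, by the $3$-round almost-independence hypothesis, every encryption function can be written in the form
$$\tau_k=\bar\tau_1\,\gs_{k_{i-1}}\,\gamma_i\,\gl_i\,\gs_{k_i}\,\gamma_{i+1}\,\gl_{i+1}\,\gs_{k_{i+1}}\,\bar\tau_2,$$
where $\bar\tau_1,\bar\tau_2$ are fixed permutations depending only on the frozen round keys and $(k_{i-1},k_i,k_{i+1})$ ranges freely over $U_1\times U_2\times U_3$. Assume, for contradiction, that non-trivial partitions $\cA$ and $\cB$ exist such that every such $\tau_k$ maps $\cA$ onto $\cB$, and introduce
$$\cA_1:=\cA\bar\tau_1,\quad \cA_2:=\cA_1\,\gamma_i\gl_i,\quad \cA_3:=\cA_2\,\gamma_{i+1}\gl_{i+1}.$$
Fixing any two of the three free round keys at $0$ and letting the remaining one range over $U_j$ shows that all translations of $T_j$ send $\cA_j$ to one and the same target partition; since $0\in U_j$ forces this target to coincide with $\cA_j$ itself, the group $T_j$ stabilizes $\cA_j$. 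Proposition~\ref{prop:blocchi2} then forces each $\cA_j$ to be either a linear partition or a linear-affine partition with associated codimension-one subspace $U_j$, for $j=1,2,3$.

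Next, I would analyze $\gamma_i$. By Proposition~\ref{prop:ml2}, the partition $\cA_2\gl_i^{-1}$ is linear or linear-affine with respect to $U_2\gl_i^{-1}$, and $\gamma_i$ sends $\cA_1$ (attached to $U_1$) onto $\cA_2\gl_i^{-1}$ (attached to $U_2\gl_i^{-1}$). The four resulting subcases ``linear vs.\ linear-affine'' on source and target are covered respectively by Proposition~\ref{prop:blocchi} and Lemmas~\ref{lm:1},~\ref{lm:2},~\ref{lm:3} (or \ref{lm:11},~\ref{lm:22},~\ref{lm:33} if instead the $4$-uniform hypothesis is in force). The assumption $J_{U_1}\cap J_{U_2\gl_i^{-1}}=\emptyset$ is exactly what is needed to invoke Lemma~\ref{lm:3}/\ref{lm:33} in the genuinely linear-affine-to-linear-affine subcase; in all four cases the conclusion is that both partitions are in fact linear with common underlying subspace a wall $W$, so that $\cA_1=\cL(W)$ and $\cA_2=\cL(W\gl_i)$.

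Finally, I would apply the same dichotomy to $\gamma_{i+1}$, which now maps the linear partition $\cA_2=\cL(W\gl_i)$ onto $\cA_3\gl_{i+1}^{-1}$. Since the source is already purely linear, Proposition~\ref{prop:blocchi} or Lemma~\ref{lm:1}/\ref{lm:11} applies without any further intersection hypothesis and forces $W\gl_i$ to be a wall. But then $\gl_i$ sends the wall $W$ to the wall $W\gl_i$, contradicting the strong-properness of $\gl_i$. Hence no such non-trivial partitions $\cA,\cB$ exist, and in particular $\Gamma(\cC)$ is primitive. The main obstacle I anticipate is the bookkeeping step that upgrades ``$T_j$ maps $\cA_j$ onto a fixed target partition'' to ``$T_j$ stabilizes $\cA_j$'': this is the pivotal idea that allows Proposition~\ref{prop:blocchi2} to be brought to bear inside an encryption function whose round keys are far from independent, and it relies crucially on $0\in U_j$.
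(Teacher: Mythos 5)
Your proposal is correct and follows essentially the same route as the paper's proof: reduce to the three middle rounds via almost-independence, use Proposition~\ref{prop:blocchi2} to force each intermediate partition $\cA_j$ to be linear or linear-affine with respect to $U_j$, transport through $\gl_i$ via Proposition~\ref{prop:ml2}, and then apply Proposition~\ref{prop:blocchi} and Lemmas~\ref{lm:1}--\ref{lm:3} (resp.\ \ref{lm:11}--\ref{lm:33}) to conclude that $\gl_i$ would have to map a wall to a wall, contradicting strong properness. Your explicit case split over the four linear/linear-affine combinations, and the remark that $0\in U_j$ pins the common image of $\cA_j$ down to $\cA_j$ itself, are just slightly more detailed renderings of steps the paper treats with ``similarly.''
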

\begin{proof}
The proof is similar to Theorem~\ref{th:gam}, with the only difference that in this case the partitions $\cA_1,\cA_2$ and $\cA_3$ can be also linear-affine.

Since $\Phi$ is 3-round almost-independent at round $i$ with respect to $T_1,T_2,T_3$, there exist some fixed values $\bar{k}_j$, with $j\in \{1,...,\ell\}\setminus\{i-1,i,i+1\}$, such that 
$(\bar{k}_1,...k_{i-1},k_i,k_{i+1},...,\bar{k}_\ell)$ is in $\Im(\Phi)$, for any $(k_{i-1},k_i,k_{i+1})\in U_1\times U_2\times U_3$.\\
Denote $\gt_j=\gamma_j\gl_j$, and consider the encryption maps 
\begin{equation}\label{eq:gtK2}
\tau_K=\bar{\gt}_1 \gs_{k_{i-1}}\gamma_i\gl_i\gs_{k_i}\gamma_{i+1}\gl_{i+1}\gs_{k_{i+1}}\bar{\gt}_2,
\end{equation}
where  $\bar{\gt}_1=\tau_{1}\gs_{\bar{k}_1}\cdots\tau_{i-2}\gs_{\bar{k}_{i-2}}\tau_{i-1}$, $\bar{\gt}_2=\tau_{i+2}\gs_{\bar{k}_{i+2}}\cdots\tau_{\ell}\gs_{\bar{k}_{\ell}}$, and
$(k_{i-1},k_i,k_{i+1})\in U_1\times U_2\times U_3$.

Suppose that all the encryption functions map $\cA$ onto $\cB$. Denoting by $\cA_1=\cA\bar{\gt}_1$ and $\cB'=\cB\gr^{-1}$ with $\gr=\gt_i\gt_{i+1}\bar\gt_2$, we have that, for all $k_{i-1}\in U_1$, $\gs_{k_{i-1}}$ maps $\cA_1$ into $\cB'$. From Proposition \ref{prop:blocchi2} follows that $\cA_1=\cB'$ and it is linear or linear-affine. Similarly, the partitions $\cA_2=\cA_1\gt_i$ and $\cA_3=\cA_2\gt_{i+1}$ are linear or linear-affine.

Now, suppose that all these partitions are linear-affine (the proof for the other possible cases follows in a similar way). Then $\cA_1=\cL\cA_{U_1}(W_1|W_2)$, $\cA_2=\cL\cA_{U_2}(W'_1|W'_2)$ and $\cA_1=\cL\cA_{U_3}(W''_1|W''_2)$ where $W_{1,2}\subseteq U_1$, $W'_{1,2}\subseteq U_2$ and $W''_{1,2}\subseteq U_3$.

From Proposition \ref{prop:ml2} we have that $\gamma_i$ maps the partition $\cL\cA_{U_1}(W_1|W_2)$ into $\cL\cA_{U_2\gl_i^{-1}}(W'_1\gl_i^{-1}|W'_2\gl_i^{-1})$. Thus, from Lemma~\ref{lm:3} or Lemma~\ref{lm:33} we have that $W_1$ and $W'_1\gl_i^{-1}$ are wall, and in addition the two partitions are linear. Thus, we have that $\cA_2=\cL(W'_1)$ and, now, $\gamma_{i+1}$ maps the partition $\cL(W'_1)$ into the partition $\cL\cA_{U_3\gl_{i+1}^{-1}}(W''_1\gl_{i+1}^{-1}|W''_2\gl_{i+1}^{-1})$. From Lemma~\ref{lm:1} or Lemma~\ref{lm:11}, we have that also $W_1'$ is a wall, contradiction. 
Hence, there do not exist non-trivial partitions $\cA$ and $\cB$.
\end{proof}

We can note that if any of the three groups is the translation group $T(V)$, then one of the partition $\cA_i$ can be only linear. Using this fact, we have the following corollary.

\begin{corollary}\label{cor:nonlimit}
Let $\cC$ be a tb cipher with a key-schedule $\Phi$ 3-round almost-independent at round $i$ with respect to $T_1,T_2$ and $T_3$, for some $2\le i\le \ell-1 $.  Suppose that for one $j$, $T_j=T(V)$. If the parallel S-boxes of round $i$ and round $i+1$ satisfy the conditions of Theorem~\ref{th:mainme} and $\gl_i$ is strongly proper, then $\Gamma(\cC)$ is primitive.
\end{corollary}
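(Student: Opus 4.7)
The plan is to run the argument of Theorem~\ref{th:mainme} almost verbatim, using the extra hypothesis $T_j=T(V)$ to sidestep the linear-affine to linear-affine transition---precisely where the condition $J_{U_2\gl_i^{-1}}\cap J_{U_1}=\emptyset$ was needed in Lemma~\ref{lm:3} or~\ref{lm:33}---so that this condition can be dropped from the hypotheses.

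I would assume for contradiction that non-trivial partitions $\cA,\cB$ exist with every encryption function mapping $\cA$ onto $\cB$, and restrict attention to the encryptions produced by the $3$-round almost-independent property at round $i$, so that $k_{i-1},k_i,k_{i+1}$ range independently over $U_1\times U_2\times U_3$. Defining $\cA_1=\cA\bar\gt_1$, $\cA_2=\cA_1\gt_i$, $\cA_3=\cA_2\gt_{i+1}$ as in the proof of Theorem~\ref{th:mainme}, Proposition~\ref{prop:blocchi2} yields that each $\cA_h$ is either linear or linear-affine with underlying hyperplane $U_h$. The new input is Proposition~\ref{prop:partTV}: for the index $j$ with $T_j=T(V)$, the partition $\cA_j$ is forced to be linear, say $\cA_j=\cL(W_j)$.

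Starting from this forced linear $\cA_j$, I would propagate linearity outwards. Each transition $\cA_{j-1}\to\cA_j$ or $\cA_j\to\cA_{j+1}$ has at least one linear endpoint, so exactly one of Lemma~\ref{lm:1}, Lemma~\ref{lm:2}, Lemma~\ref{lm:11} or Lemma~\ref{lm:22} applies (depending on which of the two sets of S-box conditions in Theorem~\ref{th:mainme} is assumed), collapsing the linear-affine neighbour to a linear partition whose underlying subspace is a wall. A small case split on $j\in\{1,2,3\}$ (pushing once if $j=1$ or $j=3$, twice if $j=2$) then yields $\cA_h=\cL(W_h)$ for $h=1,2,3$ with each $W_h$ a wall. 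At this point $\gamma_i$ maps $\cL(W_1)$ onto $\cL(W_2\gl_i^{-1})$, so Proposition~\ref{prop:blocchi} forces $W_1=W_2\gl_i^{-1}$, i.e.\ $W_1\gl_i=W_2$, exhibiting a wall mapped by $\gl_i$ onto another wall, which contradicts the strong properness of $\gl_i$.

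The main---though routine---obstacle is the case-split bookkeeping on the position of $j$: I must check that whichever one-sided collapsing lemma applies really produces the same wall on both sides of each transition, so that the chain of walls is consistent and Proposition~\ref{prop:blocchi} can be invoked at the final step. Since Lemmas~\ref{lm:1},~\ref{lm:2},~\ref{lm:11} and~\ref{lm:22} already identify the relevant subspaces in their respective conclusions, this propagation is essentially automatic, and no hypothesis on the intersection of the $J_{U_h}$'s is required.
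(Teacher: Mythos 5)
Your proposal is correct and follows essentially the same route as the paper: a case split on which $T_j$ equals $T(V)$, forcing $\cA_j$ to be linear, then propagating linearity outward via Lemmas~\ref{lm:1},~\ref{lm:2},~\ref{lm:11},~\ref{lm:22} until two walls related by $\gl_i$ appear, contradicting strong properness. The only cosmetic difference is that the final identification $W_1\gl_i=W_2$ need not invoke Proposition~\ref{prop:blocchi} (whose anti-invariance hypothesis is not literally assumed in the $4$-uniform case): it already follows from the lemmas' conclusions together with the fact that a parallel map fixing $0$ maps each wall to itself.
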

\begin{proof}
As in Theorem~\ref{th:mainme}, we have the partitions $\cA_1,\cA_2$ and $\cA_3$. Suppose that $T_1=T(V)$, and then $\cA_1=\cL(W)$ is linear. Thus, we can apply Lemma~\ref{lm:1} or Lemma~\ref{lm:11}. From this, we have that $W$ is a wall and $\cA_2$ is linear with $\cA_2=\cL(W\gl_i)$. Again, we can apply Lemma~\ref{lm:1} or Lemma~\ref{lm:11} obtaining that $W\gl_i$ is a wall.

Now, suppose $T_2=T(V)$ and $\cA_2=\cL(W)$. From Lemma~\ref{lm:2} or Lemma~\ref{lm:22} we have that $\cA_1$ is linear and $W\gl_i^{-1}$ is a wall. Then, applying always Lemma~\ref{lm:1} or Lemma~\ref{lm:11} we can obtain that also $W$ is a wall.

For the last case, we have $\cA_3=\cL(W)$. Then, $\gamma_{i+1}$ maps $\cA_2$ (linear or linear-affine) into $\cL(W\gl_{i+1}^{-1})$. We can apply Lemma~\ref{lm:2} or Lemma~\ref{lm:22} to have that $\cA_2$ is linear with $\cA_2=\cL(W')$ and $W'=W\gl_{i+1}^{-1}$ a wall. Now, using Lemma~\ref{lm:2} or Lemma~\ref{lm:22} we can also obtain that $\cA_1=\cL(W'\gl_i^{-1})$ and also $W'\gl_i^{-1}$ is a wall. In all the cases we obtain a contradiction to the fact that $\gl_i$ is strongly proper.
\end{proof}

\section{Conclusion}
In this paper we have introduced properties of the key-schedule algorithm of a cipher $\cC$ (Definition~\ref{def:key} and Definition~\ref{def:3rai}) that allow to show that the group generated by the encryption functions of the cipher is primitive.
In particular, in Theorem~\ref{th:mainme} with a key-schedule which generates (at least) $2^{3n-3}$ sequences of round keys we provided cryptographic properties for the components of the round functions which guarantee the primitivity. 

For the case of $4$-uniform permutations, in Section~\ref{sec:4uni} we have exhibited a property, related to the linear structures of an S-box, that for dimensions greater than or equal to 4 is much easier to check with respect to anti-invariance.

It would be interesting to further study the properties for a key-schedule in order to obtain similar results with a number of round keys sequences smaller than $2^{3n-3}$. For example, for the well known cipher AES we can have only $2^n$ or $2^{2n}$ sequences.

In \cite{bannier2016partition} (see also \cite{bannier2017partition}) the authors generalized the idea of the primitive trapdoor and used the linear partitions for designing a trapdoored block cipher so that the round functions $\gamma\gl$ maps a linear partition into another one. 

From the study carried out in the present paper, we have also that if we could implement a partition-based trapdoor which works not for all possible the round keys, but just for half of them, then we could use also the linear-affine partitions to design a trapdoored cipher.
However, we have shown that under the properties we have imposed on the components of the cipher, such a scenario cannot occur.

\section*{Acknowledgments}
This research was supported by Trond Mohn Stiftelse (TMS) foundation.

\end{document}